\documentclass[11pt,draftcls,onecolumn]{IEEEtran}

\usepackage{macros}

\hypersetup{
  colorlinks = true,
  urlcolor   = red,
  linkcolor  = blue,
  citecolor  = red
}

\makeatletter


\@addtoreset{theorem}{section}
\@addtoreset{proposition}{section}
\@addtoreset{corollary}{section}
\@addtoreset{example}{section}
\@addtoreset{remark}{section}
\@addtoreset{lemma}{section}
\@addtoreset{definition}{section}
\@addtoreset{question}{section}
\@addtoreset{conjecture}{section}

\def\@seccntformat#1{\csname the#1\endcsname\quad}
\makeatother



\geometry{left=1in, right=1in, top=1in, bottom=1in}

\begin{document}
\setlength{\baselineskip}{1\baselineskip}

\title{\Huge{On Strongly Regular Graphs and the Friendship Theorem}}

\author{Igal Sason
\thanks{
Igal Sason is with the Viterbi Faculty of Electrical and Computer Engineering and the Department of Mathematics (as a
secondary appointment) at the Technion --- Israel Institute of Technology, Haifa 3200003, Israel. Email: sason@ee.technion.ac.il.}}

\maketitle

\thispagestyle{empty}
\setcounter{page}{1}

\vspace*{-1cm}

{\footnotesize \noindent {\bf Abstract.}
This paper presents an alternative proof of the celebrated friendship theorem, originally established by Erd\H{o}s, R\'{e}nyi,
and S\'{o}s (1966). The proof relies on a closed-form expression for the Lov\'{a}sz $\vartheta$-function of strongly regular graphs,
recently derived by the author. Additionally, the paper considers some known extensions of the theorem, offering discussions that provide
insights into the friendship theorem, one of its extensions, and the proposed proof.
Leveraging the closed-form expression for the Lov\'{a}sz $\vartheta$-function of strongly regular graphs, the paper further establishes
new necessary conditions for a strongly regular graph to be a spanning or induced subgraph of another strongly regular graph.
In the case of induced subgraphs, the analysis also incorporates a property of graph energies. Some of these results are extended
to regular graphs and their subgraphs.}

\vspace*{0.1cm}
\noindent {\bf Keywords.}
Friendship theorem, strongly regular graph, Lov\'{a}sz $\vartheta$-function, windmill graph, graph invariants, graph energy,
spanning subgraph, induced subgraph.

\vspace*{0.1cm}
\noindent {\bf 2020 Mathematics Subject Classification (MSC).} 05C50, 05C60, 05E30.

\section{Introduction}
\label{section: Introduction}

The friendship theorem in graph theory states that if a finite graph has the property that every pair of distinct vertices
shares exactly one common neighbor, then the graph consists of edge-disjoint triangles sharing a single, central vertex.
In other words, such a graph is necessarily a friendship graph (also called a windmill graph), where a central vertex is
adjacent to all others, and additional edges form edge-disjoint triangles.
The theorem's name reflects its intuitive human interpretation, which asserts that if every two individuals in a (finite) group
have exactly one mutual friend, then there must be someone who is everybody's friend (often referred to as the "politician").

That theorem was first proved by Erd\H{o}s, R\'{e}nyi, and S\'{o}s \cite{ErdosRS66}, and it found diverse applications
in combinatorics and other disciplines. While the friendship theorem itself is a result in graph theory, its structural properties
make it useful in designing graph-based codes, network coding, and cryptographic schemes. In network coding, graphs represent
information flow between nodes, and the friendship property ensures that all transmissions go through a single relay node, which
models centralized communication structures such as satellite communication, where every ground station interacts via a single hub.
This property is also useful in wireless sensor networks, where multiple sensors communicate through a central processing node. Furthermore,
friendship graphs can be linked to block designs and combinatorial structures used to construct error-correcting codes with good
covering properties. Beyond these applications, the friendship theorem has a broad range of applications in computer science, biology,
social sciences, and economics. The fact that every two vertices share exactly one mutual neighbor makes it a useful model for systems
where a single hub controls multiple independent interactions, such as in social influence, network protocols, and centralized
coordination problems.

The original proof of the theorem involves arguments based on graph connectivity, degree counting, and combinatorial reasoning.
The reader is referred to a nice exposition of a proof of the friendship theorem in Chapter~44 of \cite{AignerZ18}, which combines
combinatorics and linear algebra (spectral graph theory). Due to its importance and simplicity, the theorem has been proved in various ways.
For historical insights into these proofs, as well as a novel combinatorial proof and an extension of the theorem, the reader is referred to
\cite{MertziosU16}.

The aim of this paper is twofold. First, it provides an alternative proof of that theorem, relying on a recent result by
the author \cite{Sason23}, in which a closed-form expression for the celebrated Lov\'{a}sz $\vartheta$-function was derived for the
structured family of strongly regular graphs. We believe this proof offers a new perspective on the existing collection of proofs
of the friendship theorem. The paper further considers some known extensions of the theorem, offering discussions that provide insights
into the friendship theorem, one of its extensions from \cite{MertziosU16}, and the proposed proof. The second aim of this paper,
building on the closed-form expression for the Lov\'{a}sz $\vartheta$-function of all strongly regular graphs, is to establish new necessary
conditions for a strongly regular graph to be a spanning or induced subgraph of another strongly regular graph. For induced subgraphs,
the analysis also incorporates a property of graph energies. Some of these results are extended to regular graphs and their subgraphs.
In general, determining whether a graph is a spanning or induced subgraph of another given graph is an important and broadly applicable
problem across theoretical, algorithmic, and applied areas of graph theory.

This paper is structured as follows. Section~\ref{section: preliminaries} introduces the notation and essential background
required for the analysis in this paper. Section~\ref{section: friendship theorem, proofs, and extensions} provides our alternative
proof of the friendship theorem, followed by a variation of a known spectral graph-theoretic proof, and a consideration of
the friendship theorem and our alternative proof to gain further insights into the theorem, our proposed proof, and one of
the theorem's extensions in \cite{MertziosU16}.
Based on the Lov\'{a}sz $\vartheta$-function of strongly regular graphs, Sections~\ref{section: spanning subgraphs}
and~\ref{section: induced subgraphs} establish new necessary conditions for one strongly regular graph to be a spanning
or induced subgraph, respectively, of another strongly regular graph. The proposed conditions can be easily checked,
their utility is demonstrated, and they are extended to regular graphs and subgraphs, with a consideration of their
computational complexity.

\section{Preliminaries}
\label{section: preliminaries}

This section presents the notation and necessary background, including definitions and theorems
essential for the analysis in this paper.

Let $\Gr{G}$ be a graph with vertex set $\V{\Gr{G}}$ and edge set $\E{\Gr{G}}$.
A graph $\Gr{G}$ is called simple if it has no self-loops and no multiple edges between any pair
of vertices. Throughout the paper, unless explicitly mentioned, all graphs under consideration are
simple, finite, and undirected. The following standard notation and definitions are used.

\begin{definition}[Graph complement]
{\em The complement of a graph $\Gr{G}$, denoted by $\CGr{G}$, is a graph whose vertex set is
$\V{\Gr{G}}$, and its edge set is the complement set $\CGr{\E{\Gr{G}}}$.
Every vertex in $\V{\Gr{G}}$ is nonadjacent to itself in $\Gr{G}$ and $\CGr{G}$, so $\{i,j\} \in \E{\CGr{G}}$
if and only if $\{i, j\} \notin \E{\Gr{G}}$ with $i \neq j$.}
\end{definition}

\begin{definition}[Integer-valued graph invariants]
\noindent
{\em
\begin{itemize}
\item Let $k \in \naturals $. A proper $k$-coloring of a graph $\Gr{G}$
is a function $c \colon \V{\Gr{G}} \to \{1,2,...,k\}$, where $c(v) \ne c(u)$ for every
$\{u,v\}\in \E{\Gr{G}}$. The chromatic number of $\Gr{G}$, denoted by $\chi(\Gr{G})$,
is the smallest $k$ for which there exists a proper $k$-coloring of $\Gr{G}$.
\item A clique in a graph $\Gr{G}$ is a subset of vertices $U\subseteq \V{\Gr{G}}$,
where $\{u,v\} \in \E{\Gr{G}}$ for every $u,v \in U$ with $u \neq v$. The clique number
of $\Gr{G}$, denoted by $\omega(\Gr{G})$, is the largest size of a clique in $\Gr{G}$.
\item An independent set in a graph $\Gr{G}$ is a subset of vertices $U\subseteq \V{\Gr{G}}$,
where $\{u,v\} \notin \E{\Gr{G}}$ for every $u,v \in U$. The independence number of $\Gr{G}$,
denoted by $\indnum{\Gr{G}}$, is the largest size of an independent set in $\Gr{G}$.
Consequently, $\indnum{\Gr{G}} = \omega(\CGr{G})$ for every graph $\Gr{G}$.
\end{itemize}
These integer-valued functions of a graph are invariant under graph isomorphisms, so they
are referred to as graph invariants.}
\end{definition}

We next introduce the Lov\'{a}sz $\vartheta$-function of a graph $\Gr{G}$, and then consider
some of its useful properties. To that end, orthonormal representations of graphs are introduced.
\begin{definition}[Orthonormal representations of a graph]
\label{def: orthonormal representation}
{\em Let $\Gr{G}$ be a finite, simple, and undirected graph, and $d \in \naturals$.
An orthonormal representation of $\Gr{G}$ in the $d$-dimensional Euclidean space $\Reals^d$
assigns to each vertex $i \in \V{\Gr{G}}$ a unit vector ${\bf{u}}_i \in \Reals^d$ such that,
for every two distinct and nonadjacent vertices in $\Gr{G}$, their assigned vectors are orthogonal.}
\end{definition}
\begin{definition}[Lov\'{a}sz $\vartheta$-function, \cite{Lovasz79_IT}]
\label{definition: Lovasz function}
{\em Let $\Gr{G}$ be a finite, undirected and simple graph.
The Lov\'{a}sz $\vartheta$-function of $\Gr{G}$ is defined as
\begin{eqnarray}
\label{eq: Lovasz theta function}
\vartheta(\Gr{G}) \triangleq \min_{\{\bf{u}_i\}, \bf{c}} \; \max_{i \in \V{\Gr{G}}} \,
\frac1{\bigl( {\bf{c}}^{\mathrm{T}} {\bf{u}}_i \bigr)^2} \, ,
\end{eqnarray}
where the minimum is taken over all orthonormal representations $\{{\bf{u}}_i: i \in \V{\Gr{G}} \}$ of $\Gr{G}$, and
over all unit vectors ${\bf{c}}$.
The unit vector $\bf{c}$ is called the {\em handle} of the orthonormal representation. By the Cauchy-Schwarz inequality,
$\bigl|{\bf{c}}^{\mathrm{T}} {\bf{u}}_i \bigr| \leq \|{\bf{c}} \| \, \|{\bf{u}}_i \| = 1$, so
$\vartheta(\Gr{G}) \geq 1$, with equality if and only if $\Gr{G}$ is a complete graph.}
\end{definition}
The dimension $d$ of the Euclidean space $\Reals^d$ in the orthonormal representations of
$\Gr{G}$, over which the minimization on the right-hand side of \eqref{eq: Lovasz theta function} is performed can be
set to the order of $\Gr{G}$, i.e., $d = \card{\V{\Gr{G}}}$ (see p.~183 of \cite{Lovasz19}).
Let the following notation be used:
\begin{itemize}
\item ${\bf{A}}$ is the $n \times n$ adjacency matrix of $\Gr{G}$ \, ($n \triangleq \card{\V{\Gr{G}}}$), where $A_{i,j}=1$
if and only if $\{i,j\} \in \E{\Gr{G}}$ and $A_{i,j}=0$ otherwise;
\item ${\bf{J}}_n$ is the all-ones $n \times n$ matrix;
\item ${\bf{\set{S}}}_{+}^n$ is the set of all $n \times n$ positive semidefinite matrices;
\item
The eigenvalues of ${\bf{A}}$ are given in decreasing order by
\begin{eqnarray}
\Eigval{\max}{\Gr{G}} = \Eigval{1}{\Gr{G}} \geq \Eigval{2}{\Gr{G}}
\geq \ldots \geq \Eigval{n}{\Gr{G}} = \Eigval{\min}{\Gr{G}}.
\end{eqnarray}

\item
The {\em adjacency spectrum} of $\Gr{G}$ is the multiset of the eigenvalues
of ${\bf{A}}$, counted with multiplicities.
\end{itemize}

The following semidefinite program (SDP) computes $\vartheta(\Gr{G})$ (by Theorem~4 of \cite{Lovasz79_IT}):
\begin{equation}
\label{eq: SDP}
\mbox{\fbox{$
\begin{array}{l}
\text{maximize} \; \; \text{Trace}({\bf{B}} \, {\bf{J}}_n) \\
\text{subject to} \\
\begin{cases}
{\bf{B}} \in {\bf{\set{S}}}_{+}^n, \; \; \text{Trace}({\bf{B}}) = 1, \\
A_{i,j} = 1  \; \Rightarrow \;  B_{i,j} = 0, \quad i,j \in \OneTo{n}.
\end{cases}
\end{array}$}}
\end{equation}
This renders the computational complexity of $\vartheta(\Gr{G})$ feasible. Specifically,
there exist standard algorithms in convex optimization that numerically compute $\vartheta(\Gr{G})$,
for every graph $\Gr{G}$, with precision of $r$ decimal digits, and in polynomial-time in
$n$ and $r$ (see Section~11.3 of \cite{Lovasz19}).

The Lov\'{a}sz $\vartheta$-function
of $\Gr{G}$ and its complement graph $\CGr{G}$ satisfy the following \cite{Lovasz79_IT,Lovasz19}:
\begin{enumerate}[(1)]
\item Sandwich theorem:
\vspace*{-0.6cm}
\begin{eqnarray}
\label{eq1a: sandwich}
& \indnum{\Gr{G}} \leq \vartheta(\Gr{G}) \leq \chi(\CGr{G}), \\[0.1cm]
\label{eq1b: sandwich}
& \omega(\Gr{G}) \leq \vartheta(\CGr{G}) \leq \chi(\Gr{G}).
\end{eqnarray}
\item Computational complexity:
The graph invariants $\indnum{\Gr{G}}$, $\omega(\Gr{G})$, and $\chi(\Gr{G})$ are NP-hard problems.
However, as mentioned above, the numerical computation of $\vartheta(\Gr{G})$ is in general feasible,
so it provides polynomial-time computable bounds on very useful graph invariants that are hard to compute.
\item Hoffman-Lov\'{a}sz inequality: Let $\Gr{G}$ be $d$-regular
of order $n$. Then,
\begin{eqnarray}
\label{eq: Lovasz79 - Theorem 9}
\vartheta(\Gr{G}) \leq -\frac{n \, \Eigval{n}{\Gr{G}}}{d - \Eigval{n}{\Gr{G}}},
\end{eqnarray}
with equality if $\Gr{G}$ is edge-transitive.
\end{enumerate}

\begin{definition}[Strongly regular graphs]
\label{definition: srg}
{\em Let $\Gr{G}$ be a $d$-regular graph of order $n$. The graph $\Gr{G}$
is a {\em strongly regular} graph (SRG) if there exist nonnegative
integers $\lambda$ and $\mu$ such that
\begin{itemize}
\item Every pair of adjacent vertices has exactly $\lambda$ common neighbors;
\item Every pair of distinct, non-adjacent vertices has exactly $\mu$ common neighbors.
\end{itemize}
Such a strongly regular graph is said to be a graph in the family $\srg{n}{d}{\lambda}{\mu}$.}
\end{definition}
It is important to note that these four parameters are interrelated (see
Proposition~\ref{proposition: Feasible Parameters of Strongly Regular Graphs} for known
results). Consequently, for some parameter vectors $(n,d,\lambda,\mu)$, the family
$\srg{n}{d}{\lambda}{\mu}$ contains no graphs.
Furthermore, for parameter vectors $(n, d, \lambda, \mu)$ such that the set
$\srg{n}{d}{\lambda}{\mu}$ is nonempty, there may exist several nonisomorphic strongly
regular graphs within it. For example, according to \cite{Brouwer}, there
exist 167~nonisomorphic strongly regular graphs in the family $\srg{64}{18}{2}{6}$.
The reader is referred to \cite{Brouwer,Maksimovic18,BrouwerM22} for the enumeration of
strongly regular graphs with given parameter vectors.

\begin{proposition} {\em \cite{BrouwerM22}}
\label{proposition: srg complement}
{\em A graph $\Gr{G}$ is strongly regular if and only if its complement $\CGr{G}$ is so. Furthermore,
if $\Gr{G}$ is a strongly regular graph in the family $\srg{n}{d}{\lambda}{\mu}$, then
$\CGr{G}$ is a strongly regular graph in the family $\srg{n}{n-d-1}{n-2d+\mu-2}{n-2d+\lambda}$.}
\end{proposition}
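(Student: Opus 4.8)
The plan is to prove the parameter formula directly by a double-counting argument, and then obtain the ``if and only if'' statement from the involutive nature of complementation. First I would observe that regularity transfers immediately: since every vertex of $\Gr{G}$ has exactly $d$ neighbors among the other $n-1$ vertices, that same vertex is adjacent in $\CGr{G}$ to exactly $n-1-d$ vertices, so $\CGr{G}$ is $(n-1-d)$-regular. This already pins down the degree parameter of the complement.

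Next, for the two common-neighbor parameters, I would fix two distinct vertices $u,v$ and partition the remaining $n-2$ vertices according to their adjacency in $\Gr{G}$ to $u$ and to $v$: let $a$ be the number adjacent to both, $b$ the number adjacent to $u$ only, $c$ the number adjacent to $v$ only, and $e$ the number adjacent to neither. The quantity $e$ is precisely the number of common neighbors of $u$ and $v$ in $\CGr{G}$, and $a+b+c+e = n-2$. The $d$-regularity of $\Gr{G}$ fixes the number of neighbors of $u$ (and of $v$) among these $n-2$ vertices, and the strong regularity of $\Gr{G}$ fixes $a$; solving the resulting linear system for $e$ then yields the desired count.

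Here the computation splits into two cases. If $u,v$ are adjacent in $\Gr{G}$ (equivalently, nonadjacent in $\CGr{G}$), then $a=\lambda$, and each of $u,v$ has $d-1$ neighbors among the remaining $n-2$ vertices after discounting the edge joining them, so $b=c=d-1-\lambda$ and $e = n-2d+\lambda$; this is the common-neighbor count $\bar{\mu}$ for nonadjacent pairs of $\CGr{G}$. If instead $u,v$ are nonadjacent in $\Gr{G}$ (adjacent in $\CGr{G}$), then $a=\mu$ and each of $u,v$ has $d$ neighbors among the other $n-2$ vertices, so $b=c=d-\mu$ and $e = n-2d+\mu-2$; this is $\bar{\lambda}$. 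Since $e$ depends only on whether the pair is adjacent in $\CGr{G}$, and not on the particular pair, $\CGr{G}$ is strongly regular in the family $\srg{n}{n-d-1}{n-2d+\mu-2}{n-2d+\lambda}$, as claimed.

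Finally, the ``if and only if'' follows at once from the fact that complementation is an involution, $\CGr{\bigl(\CGr{G}\bigr)} = \Gr{G}$: applying the forward implication to $\CGr{G}$ shows that if $\CGr{G}$ is strongly regular, then so is $\Gr{G}$. I do not anticipate a genuine obstacle; the only point requiring care is the bookkeeping of the ``$-1$'' in the neighbor counts, namely that when $u$ and $v$ are adjacent in $\Gr{G}$ the vertex $v$ must be excluded from $u$'s neighbors among the remaining $n-2$ vertices, whereas when $u$ and $v$ are nonadjacent no such exclusion occurs. Keeping these two cases separate is exactly what produces the asymmetric swap of $\lambda$ and $\mu$ in the complement's parameters.
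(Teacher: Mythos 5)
Your proof is correct. The paper itself offers no proof of this proposition---it is cited directly from \cite{BrouwerM22}---so there is no internal argument to compare against; your double-counting partition of the $n-2$ remaining vertices into the four adjacency classes, with the careful ``$-1$'' bookkeeping distinguishing the adjacent and nonadjacent cases, is the standard textbook derivation, and your two case computations ($e=n-2d+\lambda$ for pairs adjacent in $\Gr{G}$, hence the $\mu$-parameter of $\CGr{G}$, and $e=n-2d+\mu-2$ for nonadjacent pairs, hence the $\lambda$-parameter) reproduce exactly the parameters $\srg{n}{n-d-1}{n-2d+\mu-2}{n-2d+\lambda}$ claimed in the statement. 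The closing appeal to complementation being an involution correctly delivers the ``if and only if.''
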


\begin{theorem}[Bounds on Lov\'{a}sz function of regular graphs, \cite{Sason23}]
\label{theorem: vartheta srg}
{\em Let $\Gr{G}$ be a $d$-regular graph of order $n$, which is a non-complete
and non-empty graph. Then, the following bounds hold for the Lov\'{a}sz
$\vartheta$-function of $\Gr{G}$ and its complement $\CGr{G}$:
\begin{enumerate}[(1)]
\item
\begin{eqnarray}
\label{eq:21.10.22a1}
\frac{n-d+\Eigval{2}{\Gr{G}}}{1+\Eigval{2}{\Gr{G}}} \leq \vartheta(\Gr{G})
\leq -\frac{n \Eigval{n}{\Gr{G}}}{d - \Eigval{n}{\Gr{G}}}.
\end{eqnarray}

\begin{itemize}
\item Equality holds in the leftmost inequality of \eqref{eq:21.10.22a1} if $\CGr{G}$
is both vertex-transitive and edge-transitive, or if $\Gr{G}$ is a strongly regular graph;
\item Equality holds in the rightmost inequality of \eqref{eq:21.10.22a1} if $\Gr{G}$
is edge-transitive, or if $\Gr{G}$ is a strongly regular graph.
\end{itemize}
\item
\begin{eqnarray}
\label{eq:21.10.22a2}
1 - \frac{d}{\Eigval{n}{\Gr{G}}} \leq \vartheta(\CGr{G})
\leq \frac{n \bigl(1+\Eigval{2}{\Gr{G}}\bigr)}{n-d+\Eigval{2}{\Gr{G}}}.
\end{eqnarray}
\begin{itemize}
\item Equality holds in the leftmost inequality of \eqref{eq:21.10.22a2}
if $\Gr{G}$ is both vertex-transitive and edge-transitive, or if $\Gr{G}$ is
a strongly regular graph;
\item Equality holds in the rightmost inequality of \eqref{eq:21.10.22a2}
if $\CGr{G}$ is edge-transitive, or if $\Gr{G}$ is a strongly regular graph.
\end{itemize}
\end{enumerate}}
\end{theorem}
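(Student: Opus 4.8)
The plan is to establish part~(1) in full and then obtain part~(2) at once by applying part~(1) to the complement $\CGr{G}$, which is again regular (of degree $n-d-1$). The only structural input for this reduction is the standard complement eigenvalue correspondence for regular graphs: on the all-ones eigenvector $\CGr{G}$ has eigenvalue $n-d-1$, while on its orthogonal complement each eigenvalue $\Eigval{i}{\Gr{G}}$ (for $i\geq 2$) is sent to $-1-\Eigval{i}{\Gr{G}}$. Since $\Gr{G}$ is regular and non-complete one has $\Eigval{2}{\Gr{G}}\geq 0$, so $\Eigval{n}{\CGr{G}}=-\bigl(1+\Eigval{2}{\Gr{G}}\bigr)$ and $\Eigval{2}{\CGr{G}}=-\bigl(1+\Eigval{n}{\Gr{G}}\bigr)$, while $n-d-1$ is the Perron eigenvalue. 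Substituting these into the two bounds of \eqref{eq:21.10.22a1} written for $\CGr{G}$ reproduces, after simplification, the two bounds of \eqref{eq:21.10.22a2} for $\Gr{G}$; the same substitution converts the equality conditions of part~(1) for $\CGr{G}$ into those of part~(2) for $\Gr{G}$, upon noting that vertex-transitivity and strong regularity are complement-invariant (the latter by Proposition~\ref{proposition: srg complement}), while the edge-transitivity hypotheses appear directly in terms of the relevant graph. Hence it suffices to prove part~(1).

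The rightmost inequality of \eqref{eq:21.10.22a1} is precisely the Hoffman--Lov\'{a}sz inequality \eqref{eq: Lovasz79 - Theorem 9} applied to $\Gr{G}$, with its stated equality case for edge-transitive graphs. For the leftmost inequality I would combine two ingredients: the Hoffman--Lov\'{a}sz inequality applied to the $(n-d-1)$-regular graph $\CGr{G}$, which via $\Eigval{n}{\CGr{G}}=-(1+\Eigval{2}{\Gr{G}})$ gives $\vartheta(\CGr{G})\leq \tfrac{n(1+\Eigval{2}{\Gr{G}})}{n-d+\Eigval{2}{\Gr{G}}}$, together with the classical product bound $\vartheta(\Gr{G})\,\vartheta(\CGr{G})\geq n$ of \cite{Lovasz79_IT}. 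Chaining them yields
\begin{equation}
\vartheta(\Gr{G}) \geq \frac{n}{\vartheta(\CGr{G})} \geq \frac{n-d+\Eigval{2}{\Gr{G}}}{1+\Eigval{2}{\Gr{G}}},
\end{equation}
which is the leftmost inequality of \eqref{eq:21.10.22a1}.

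For the equality conditions of part~(1) in the transitive cases, recall that equality in Hoffman--Lov\'{a}sz holds when the relevant graph is edge-transitive, and equality in the product bound $\vartheta(\Gr{G})\,\vartheta(\CGr{G})=n$ holds when $\Gr{G}$ (equivalently $\CGr{G}$) is vertex-transitive. The rightmost equality then follows when $\Gr{G}$ is edge-transitive, and the leftmost equality follows when $\CGr{G}$ is simultaneously vertex-transitive (for the product step) and edge-transitive (for the Hoffman--Lov\'{a}sz step applied to $\CGr{G}$). For the strongly regular case I would argue differently, exploiting that the two spectral bounds in fact \emph{coincide}: writing $r=\Eigval{2}{\Gr{G}}$ and $s=\Eigval{n}{\Gr{G}}$ for the restricted eigenvalues of $\Gr{G}\in\srg{n}{d}{\lambda}{\mu}$, one verifies
\begin{equation}
\frac{n-d+r}{1+r} = \frac{-ns}{d-s}
\end{equation}
by clearing denominators and substituting the strongly regular relations $r+s=\lambda-\mu$, $rs=\mu-d$, and $\mu(n-d-1)=d(d-\lambda-1)$. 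Since part~(1) already sandwiches $\vartheta(\Gr{G})$ between these two now-equal quantities, $\vartheta(\Gr{G})$ is forced to equal both, giving equality in both inequalities of \eqref{eq:21.10.22a1}; applying this to the strongly regular graph $\CGr{G}$ then yields the strongly regular equality cases of \eqref{eq:21.10.22a2}.

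I expect the main obstacle to be the strongly regular case, specifically the verification that the upper and lower spectral bounds collapse to a single value: this rests on the nontrivial interdependence of the parameters $(n,d,\lambda,\mu)$ and on correctly identifying $r$ and $s$ with $\Eigval{2}{\Gr{G}}$ and $\Eigval{n}{\Gr{G}}$. Once this collapse is in place, the squeeze argument is immediate and, pleasantly, avoids constructing any explicit optimal orthonormal representation or SDP solution. A secondary point requiring care is the bookkeeping of the equality hypotheses under complementation, ensuring that ``vertex-transitive and edge-transitive'' is attached to the correct graph ($\Gr{G}$ or $\CGr{G}$) in each of the four inequalities.
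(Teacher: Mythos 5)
The paper does not prove this theorem; it is imported verbatim from \cite{Sason23} as background, so there is no in-paper proof to compare against. Your argument is nonetheless correct and follows the natural (and, as far as I can tell, the cited source's) route: the rightmost bound of \eqref{eq:21.10.22a1} is Hoffman--Lov\'{a}sz, the leftmost follows by chaining $\vartheta(\Gr{G})\,\vartheta(\CGr{G})\geq n$ with Hoffman--Lov\'{a}sz applied to the $(n-d-1)$-regular complement via $\Eigval{n}{\CGr{G}}=-(1+\Eigval{2}{\Gr{G}})$, part~(2) is part~(1) for $\CGr{G}$, and the strongly regular equality cases follow from the squeeze once one checks, using $r+s=\lambda-\mu$, $rs=\mu-d$, and $(n-d-1)\mu=d(d-\lambda-1)$, that the two spectral bounds coincide (your identity $(n-d+r)(d-s)=-ns(1+r)$ reduces exactly to Item~1 of Proposition~\ref{proposition: Feasible Parameters of Strongly Regular Graphs}). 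The only points deserving an explicit word are the ones you already flag: $\Eigval{2}{\Gr{G}}\geq 0$ for a non-complete regular graph (so that $-1-\Eigval{2}{\Gr{G}}$, not $n-d-1$, is indeed the least eigenvalue of $\CGr{G}$), and the disconnected strongly regular case (a disjoint union of complete graphs), where the identification of $r,s$ with $\Eigval{2}{\Gr{G}},\Eigval{n}{\Gr{G}}$ degenerates but both bounds still evaluate to the number of components, so the conclusion persists.
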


As a common sufficient condition, note that all the inequalities in \eqref{eq:21.10.22a1}
and \eqref{eq:21.10.22a2} hold with equality if $\Gr{G}$ is a strongly regular graph.
The following result provides a closed-form expression of the Lov\'{a}sz $\vartheta$-function of
all strongly regular graphs.

\begin{theorem}[The Lov\'{a}sz $\vartheta$-function of strongly regular graphs, \cite{Sason23}]
\label{theorem: theta of srg}
{\em Let $\Gr{G}$ be a strongly regular graph in the family $\SRG(n, d, \lambda, \mu)$.
Then,
\begin{align}
& \vartheta(\Gr{G}) = \dfrac{n \, (t+\mu-\lambda)}{2d+t+\mu-\lambda}, \label{eq1: 15.02.25}  \\[0.2cm]
& \vartheta(\CGr{G}) = 1 + \dfrac{2d}{t+\mu-\lambda},   \label{eq2: 15.02.25}
\end{align}
where
\begin{align}
t \triangleq \sqrt{(\mu-\lambda)^2 + 4(d-\mu)}.  \label{eq3: 15.02.25}
\end{align}}
\end{theorem}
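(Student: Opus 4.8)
The plan is to reduce everything to the equality cases of Theorem~\ref{theorem: vartheta srg}, which (as noted immediately after its statement) all hold when $\Gr{G}$ is strongly regular. Concretely, it suffices to substitute the smallest adjacency eigenvalue $\Eigval{n}{\Gr{G}}$ of a strongly regular graph into the rightmost equality of \eqref{eq:21.10.22a1} and the leftmost equality of \eqref{eq:21.10.22a2}, namely
\[
\vartheta(\Gr{G}) = -\frac{n \, \Eigval{n}{\Gr{G}}}{d - \Eigval{n}{\Gr{G}}}, \qquad
\vartheta(\CGr{G}) = 1 - \frac{d}{\Eigval{n}{\Gr{G}}},
\]
so that the whole computation hinges on expressing $\Eigval{n}{\Gr{G}}$ in closed form in terms of $(n,d,\lambda,\mu)$.

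First I would recall the standard spectral description of a strongly regular graph. Writing ${\bf A}$ for its adjacency matrix and counting length-two walks according to whether their endpoints are equal, adjacent, or non-adjacent yields the identity ${\bf A}^2 = (d-\mu)\,{\bf I} + (\lambda-\mu)\,{\bf A} + \mu\,{\bf J}_n$. Restricting to the $(n-1)$-dimensional subspace orthogonal to the all-ones vector (which is the Perron eigenvector with eigenvalue $d$, and on which ${\bf J}_n$ vanishes), every remaining eigenvalue $\theta$ of ${\bf A}$ satisfies the quadratic $\theta^2 - (\lambda-\mu)\,\theta - (d-\mu) = 0$. Its two roots are $\tfrac12\bigl((\lambda-\mu)\pm t\bigr)$ with $t$ as in \eqref{eq3: 15.02.25}, and since $d \geq \mu$ forces the smaller root to be nonpositive, the smallest eigenvalue is $\Eigval{n}{\Gr{G}} = \tfrac12\bigl((\lambda-\mu)-t\bigr)$ while the second-largest is $\Eigval{2}{\Gr{G}} = \tfrac12\bigl((\lambda-\mu)+t\bigr)$; I would note in passing that the latter identification remains valid even when $\Gr{G}$ is disconnected, since the larger root then coincides with $d$.

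It then remains to substitute $\Eigval{n}{\Gr{G}} = \tfrac12\bigl((\lambda-\mu)-t\bigr)$ into the two displayed equalities and simplify. For the first, $-n\,\Eigval{n}{\Gr{G}} = \tfrac12\, n\,(t+\mu-\lambda)$ and $d-\Eigval{n}{\Gr{G}} = \tfrac12\bigl(2d+t+\mu-\lambda\bigr)$, whose ratio is exactly \eqref{eq1: 15.02.25}; for the second, $-d/\Eigval{n}{\Gr{G}} = 2d/(t+\mu-\lambda)$, which gives \eqref{eq2: 15.02.25}.

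Since the substitution is algebraically routine, there is no genuine analytic obstacle here; the only points requiring care are the sign conventions in the quadratic (tracking $\mu-\lambda$ versus $\lambda-\mu$ so that the radicand matches $t$ in \eqref{eq3: 15.02.25}) and the identification of which root is $\Eigval{n}{\Gr{G}}$ as opposed to $\Eigval{2}{\Gr{G}}$, together with confirming that the strong regularity of $\Gr{G}$ is precisely what licenses the use of the \emph{equality} cases of Theorem~\ref{theorem: vartheta srg} rather than merely its inequalities. A sanity check I would run is the disjoint union of cliques (the $\mu=0$ case), where $t=\lambda+2$ and the formulas collapse to the known values: $\vartheta(\Gr{G})$ equal to the number of cliques and $\vartheta(\CGr{G})$ equal to the clique size.
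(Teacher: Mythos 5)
Your proposal is correct and follows the route the paper itself lays out: the theorem is quoted from \cite{Sason23} without an in-text proof, but the intended derivation is precisely to invoke the equality cases of Theorem~\ref{theorem: vartheta srg} for strongly regular graphs (as noted right after that theorem) and substitute $\Eigval{n}{\Gr{G}}=\tfrac12\bigl(\lambda-\mu-t\bigr)$ from the standard quadratic ${\bf A}^2=(d-\mu){\bf I}+(\lambda-\mu){\bf A}+\mu{\bf J}_n$, which is exactly the spectrum recorded later in \eqref{eigs-SRG} of Theorem~\ref{theorem: eigenvalues of srg}. Your algebra, the identification of the smaller root as $\Eigval{n}{\Gr{G}}$, and the disconnected-case check all hold up.
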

\begin{remark}
{\em In light of Theorem~\ref{theorem: theta of srg}, all strongly regular graphs in
a family $\srg{n}{d}{\lambda}{\mu}$ are not only cospectral \cite{BrouwerM22},
but they also share the same value of the Lov\'{a}sz $\vartheta$-function.}
\end{remark}

\begin{corollary}[\cite{Sason23}]
\label{corollary: property - SRGs or v.t.}
{\em Let $\Gr{G}$ be a strongly regular graph on $n$ vertices. Then,
\begin{align}
\label{eq: property - SRGs or v.t.}
\vartheta(\Gr{G}) \, \vartheta(\CGr{G}) = n.
\end{align}}
\end{corollary}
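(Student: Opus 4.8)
The plan is to establish the identity \eqref{eq: property - SRGs or v.t.} by directly substituting the two closed-form expressions supplied by Theorem~\ref{theorem: theta of srg} and exhibiting an exact cancellation; no deeper structural argument is needed. To streamline the algebra, I would abbreviate $s \triangleq t + \mu - \lambda$, where $t$ is the quantity defined in \eqref{eq3: 15.02.25}. In this notation, \eqref{eq1: 15.02.25} becomes
\[
\vartheta(\Gr{G}) = \frac{n \, s}{2d + s}.
\]

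The second step is to recast \eqref{eq2: 15.02.25} over a common denominator, namely
\[
\vartheta(\CGr{G}) = 1 + \frac{2d}{s} = \frac{2d + s}{s}.
\]
The key observation is purely formal: the numerator $2d + s$ of $\vartheta(\CGr{G})$ is exactly the denominator of $\vartheta(\Gr{G})$, while the denominator $s$ of $\vartheta(\CGr{G})$ is exactly the factor $t + \mu - \lambda$ sitting in the numerator of $\vartheta(\Gr{G})$. Multiplying the two expressions therefore collapses to
\[
\vartheta(\Gr{G}) \, \vartheta(\CGr{G}) = \frac{n \, s}{2d + s} \cdot \frac{2d + s}{s} = n,
\]
which is the claimed identity.

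There is no genuine obstacle in this argument, as the corollary is an immediate algebraic consequence of the preceding theorem. The only point I would verify explicitly is that the denominators are nonzero, i.e., that $s = t + \mu - \lambda > 0$ (and hence $2d + s > 0$), so that both closed-form expressions are well-defined and the cancellation is legitimate. This follows from the standard parameter constraints of a strongly regular graph: one has $\mu \le d$, so $t = \sqrt{(\mu - \lambda)^2 + 4(d - \mu)} \ge |\mu - \lambda| \ge \lambda - \mu$, giving $s \ge 0$; and since $\lambda \le d - 1$ rules out the borderline equality case, in fact $s > 0$. With this remark in place, the three displayed lines constitute the full proof.
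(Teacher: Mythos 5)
Your proposal is correct and follows exactly the intended derivation: the paper states this corollary (citing \cite{Sason23}) as an immediate consequence of Theorem~\ref{theorem: theta of srg}, obtained by multiplying \eqref{eq1: 15.02.25} and \eqref{eq2: 15.02.25} and cancelling, which is precisely what you do. Your added check that $t+\mu-\lambda>0$ is a sound and worthwhile verification that the expressions are well-defined.
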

\begin{remark}
{\em Corollary~\ref{corollary: property - SRGs or v.t.} shows that equality~\eqref{eq: property - SRGs or v.t.}
does not only for all vertex-transitive graphs (see Theorem~8 of \cite{Lovasz79_IT}), but it also holds for
all strongly regular graphs. It should be noted that strongly regular graphs may not be vertex-transitive
(see, e.g., \cite{Gritsenko01}).}
\end{remark}

The closed-form expression in Theorem~\ref{theorem: theta of srg} for the Lov\'{a}sz $\vartheta$-function
of strongly regular graphs eliminates the need for a numerical solution to the SDP in \eqref{eq: SDP},
while also providing an explicit analytical expression for that function. This facilitates the derivation of analytical bounds
on key graph invariants, which are generally NP-hard to compute. These bounds are expressed in terms of the parameters
$(n,d,\lambda,\mu)$ characterizing strongly regular graphs, as it is next shown in Corollary~\ref{corollary: bounds on graph invariants for SRGs}.
\begin{corollary}[Bounds on graph invariants of strongly regular graphs, \cite{Sason23}]
\label{corollary: bounds on graph invariants for SRGs}
{\em Let $\Gr{G}$ be a strongly regular graph in the family $\SRG(n, d, \lambda, \mu)$.
Then,
\begin{eqnarray}
& \indnum{\Gr{G}} \leq \bigg\lfloor \dfrac{n \, (t+\mu-\lambda)}{2d+t+\mu-\lambda} \bigg\rfloor \label{eq1: 25.02.2025} \\[0.1cm]
& \omega(\Gr{G}) \leq 1 + \bigg\lfloor \dfrac{2d}{t+\mu-\lambda} \bigg\rfloor, \label{eq2: 25.02.2025} \\[0.1cm]
& \chi(\Gr{G}) \geq 1 + \bigg\lceil \dfrac{2d}{t+\mu-\lambda} \bigg\rceil, \label{eq3: 25.02.2025} \\[0.1cm]
& \chi(\CGr{G}) \geq \bigg\lceil \dfrac{n \, (t+\mu-\lambda)}{2d+t+\mu-\lambda} \bigg\rceil,  \label{eq4: 25.02.2025}
\end{eqnarray}
where $t$ is given in \eqref{eq3: 15.02.25}.}
\end{corollary}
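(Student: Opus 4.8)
The plan is to combine the two-sided Sandwich inequalities \eqref{eq1a: sandwich} and \eqref{eq1b: sandwich} with the closed-form expressions for $\vartheta(\Gr{G})$ and $\vartheta(\CGr{G})$ supplied by Theorem~\ref{theorem: theta of srg}, and then to sharpen each resulting real-valued estimate to an integer bound by invoking the integrality of the graph invariants $\indnum{\Gr{G}}$, $\omega(\Gr{G})$, $\chi(\Gr{G})$, and $\chi(\CGr{G})$. Since each of these invariants is a nonnegative integer, an upper bound on it may be replaced by its floor and a lower bound by its ceiling, with $t$ throughout denoting the quantity defined in \eqref{eq3: 15.02.25}.

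First I would treat the two invariants that are controlled by $\vartheta(\Gr{G})$. The Sandwich theorem gives $\indnum{\Gr{G}} \leq \vartheta(\Gr{G})$ from \eqref{eq1a: sandwich} and $\chi(\CGr{G}) \geq \vartheta(\Gr{G})$ from the same chain; substituting the value of $\vartheta(\Gr{G})$ in \eqref{eq1: 15.02.25}, then flooring the upper bound and ceiling the lower bound, yields \eqref{eq1: 25.02.2025} and \eqref{eq4: 25.02.2025}, respectively. Next, for the two invariants controlled by $\vartheta(\CGr{G})$, the Sandwich theorem gives $\omega(\Gr{G}) \leq \vartheta(\CGr{G})$ and $\chi(\Gr{G}) \geq \vartheta(\CGr{G})$ from \eqref{eq1b: sandwich}; substituting the value of $\vartheta(\CGr{G})$ in \eqref{eq2: 15.02.25} produces the intermediate bounds $\omega(\Gr{G}) \leq 1 + \tfrac{2d}{t+\mu-\lambda}$ and $\chi(\Gr{G}) \geq 1 + \tfrac{2d}{t+\mu-\lambda}$.

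The only step requiring a small remark is the passage of the additive constant $1$ through the floor and ceiling operations that appear in \eqref{eq2: 25.02.2025} and \eqref{eq3: 25.02.2025}: because $1$ is an integer, one has $\lfloor 1 + x \rfloor = 1 + \lfloor x \rfloor$ and $\lceil 1 + x \rceil = 1 + \lceil x \rceil$ for every real $x$, which accounts for the precise form of those two displayed inequalities. I do not anticipate any genuine obstacle: the entire argument reduces to a direct application of two previously established results, namely the Sandwich theorem and the closed-form evaluation in Theorem~\ref{theorem: theta of srg}, combined with elementary integer rounding. The conceptual content lies entirely in Theorem~\ref{theorem: theta of srg}, which converts the otherwise implicit semidefinite-program value $\vartheta$ into an explicit function of the parameters $(n,d,\lambda,\mu)$, thereby turning the $\vartheta$-based Sandwich bounds into checkable closed-form bounds on invariants that are in general NP-hard to compute.
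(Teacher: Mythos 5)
Your proposal is correct and follows exactly the paper's own route: the paper proves this corollary in one line by combining the sandwich inequalities \eqref{eq1a: sandwich} and \eqref{eq1b: sandwich} with the closed-form expressions of Theorem~\ref{theorem: theta of srg}, with the floor and ceiling operations justified implicitly by the integrality of the four invariants, just as you spell out.
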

\begin{proof}
The bounds in \eqref{eq1: 25.02.2025}--\eqref{eq4: 25.02.2025} follow from the combination of the sandwich theorem
in \eqref{eq1a: sandwich} and \eqref{eq1b: sandwich} with Theorem~\ref{theorem: theta of srg}.
\end{proof}

\begin{example}[Bounds on graph invariants of strongly regular graphs]
{\em The tightness of the bounds in Corollary~\ref{corollary: bounds on graph invariants for SRGs} is exemplified
for four strongly regular graphs as follows.
\begin{enumerate}[(1)]
\item Petersen graph:
Let $\Gr{G}_1$ be the Petersen graph, the unique strongly regular graph in the family $\srg{10}{3}{0}{1}$ (see
Section~10.3 of \cite{BrouwerM22}). For $\Gr{G}_1$, the upper bounds on its independence and clique numbers in \eqref{eq1: 25.02.2025}
and \eqref{eq2: 25.02.2025}, respectively, as well as the lower bound on its chromatic number in \eqref{eq3: 25.02.2025} are tight:
\begin{align}
\indnum{\Gr{G}_1} = 4, \quad \omega(\Gr{G}_1) = 2, \quad \chi(\Gr{G}_1) = 3.
\end{align}
\item Schl\"{a}fli, Shrikhande, and Hall-Janko graphs:
\begin{itemize}
\item The Schl\"{a}fli graph is (up to isomorphism) the unique strongly regular graph in the family $\srg{27}{16}{10}{8}$ (see Section~10.7 of
\cite{BrouwerM22}).
\item The Shrikhande graph is one of two nonisomorphic strongly regular graphs in the family $\srg{16}{6}{2}{2}$ (see Section~10.6 of
\cite{BrouwerM22}).
\item The Hall-Janko graph is the unique strongly regular graph in the family $\srg{100}{36}{14}{12}$ (see Section~10.32
of \cite{BrouwerM22}).
\end{itemize}
Let $\Gr{G}_2$, $\Gr{G}_3$, and $\Gr{G}_4$ denote these graphs, respectively. The lower bounds
on the chromatic numbers of these graphs, as given in \eqref{eq3: 25.02.2025}, are all tight:
\begin{align}
\chi(\Gr{G}_2) = 9, \quad \chi(\Gr{G}_3) = 4, \quad \chi(\Gr{G}_4) = 10.
\end{align}
\item[(3)] For the Shrikhande graph ($\Gr{G}_3$),
\begin{itemize}
\item the upper bound on its independence number in \eqref{eq1: 25.02.2025} is tight: $\indnum{\Gr{G}_3} = 4$,
\item the upper bound on its clique number in \eqref{eq2: 25.02.2025} is, however, not tight as the upper bound is~4, while the
actual clique number is $\omega(\Gr{G}_3) = 3$.
\end{itemize}
\end{enumerate}}
\end{example}

\section{The Friendship Theorem: An Alternative Proof and Further Extensions}
\label{section: friendship theorem, proofs, and extensions}

This section is structured as follows: it presents the renowned friendship theorem, originally
established in \cite{ErdosRS66} (see Section~\ref{subsection: friendship theorem}), and it provides a new
alternative proof that relies on the Lov\'{a}sz $\vartheta$-function of strongly regular graphs (see
Theorem~\ref{theorem: theta of srg} and Section~\ref{subsection: alternative proof - Lovasz function}).
This section further provides a variation of a proof from Chapter~44 of \cite{AignerZ18}, relying on spectral properties of
strongly regular graphs (see Section~\ref{subsection: alternative proof - spectrum of srg}). It finally considers
further generalizations, providing discussions in order to get further insights into the theorem, our proposed proof,
and one of the theorem’s extensions in \cite{MertziosU16}, which are also supported by numerical results for (small
and large) strongly regular graphs (see Section~\ref{subsection:  extensions of the friendship theorem}).

\subsection{The friendship theorem}
\label{subsection: friendship theorem}

\begin{theorem}[Friendship Theorem, \cite{ErdosRS66}]
\label{thm: friendship theorem}
{\em Let $\Gr{G}$ be a finite graph in which any two distinct vertices have a single
common neighbor. Then, $\Gr{G}$ consists of edge-disjoint triangles sharing a single
vertex that is adjacent to every other vertex.}
\end{theorem}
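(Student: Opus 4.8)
The plan is to split on whether $\Gr{G}$ has a \emph{universal vertex} (one adjacent to all others) and to let the closed-form $\vartheta$-function of Theorem~\ref{theorem: theta of srg} settle the remaining case. First I would dispatch the easy direction: if some vertex $w$ is adjacent to every other vertex, then for each $u \neq w$ the common neighbors of $u$ and $w$ are precisely the neighbors of $u$ other than $w$, so the friendship hypothesis forces $u$ to have exactly one further neighbor. This relation is symmetric and pairs $\V{\Gr{G}} \setminus \{w\}$ into a perfect matching, each matched pair closing a triangle with $w$ and no other edges being present; hence $\Gr{G}$ is a windmill, as claimed.

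It then remains to exclude a friendship graph with no universal vertex. Here I would invoke the classical counting lemma: any two non-adjacent vertices have equal degree, because sending each neighbor of one to its unique common neighbor with the other is a degree-preserving bijection of their neighborhoods; the absence of a universal vertex propagates this to force $\Gr{G}$ to be $k$-regular for some $k \geq 2$. Once regularity is secured, the friendship condition says exactly that adjacent and non-adjacent pairs alike share one common neighbor, i.e.\ $\Gr{G} \in \SRG(n,k,1,1)$, and the parameter identity $k(k-\lambda-1) = (n-k-1)\mu$ with $\lambda=\mu=1$ gives $n = k^2-k+1$.

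Now I would apply the closed form. With $\lambda=\mu=1$ and $d=k$ the quantity of \eqref{eq3: 15.02.25} is $t = 2\sqrt{k-1}$, which is exactly twice the non-principal eigenvalue $\Eigval{2}{\Gr{G}} = \sqrt{k-1}$, so Theorem~\ref{theorem: theta of srg} yields
\[
\vartheta(\Gr{G}) = k\sqrt{k-1}-(k-1), \qquad \vartheta(\CGr{G}) = 1 + \frac{k}{\sqrt{k-1}},
\]
in agreement with $\vartheta(\Gr{G})\,\vartheta(\CGr{G}) = n$ from Corollary~\ref{corollary: property - SRGs or v.t.}. The adjacency spectrum is thereby pinned to $\{k, +\sqrt{k-1}, -\sqrt{k-1}\}$, and since the trace of the adjacency matrix equals $k$ plus an integer multiple of $\sqrt{k-1}$ and must vanish, $s \triangleq \sqrt{k-1}$ is forced to be rational, hence an integer. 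Writing $k = s^2+1$, the vanishing trace gives $s \mid s^2+1$, hence $s \mid 1$, so $s=1$ and $k=2$. But the unique $2$-regular friendship graph is the triangle, every vertex of which is universal, contradicting the standing assumption; therefore every friendship graph has a universal vertex and is a windmill.

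I expect the main obstacle to be this final step, where the real-valued $\vartheta$ must be turned into an integrality obstruction: the closed-form expression is perfectly well defined for the hypothetical graph and so cannot by itself forbid it, so the argument must couple the spectral data it encapsulates --- equivalently the value $t = 2\sqrt{k-1}$ --- with the integrality of the eigenvalue multiplicities in order to drive $k$ down to $2$.
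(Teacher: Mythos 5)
Your proposal is correct, but its decisive step is not the one the paper's headline proof uses; it coincides instead with the paper's \emph{second} proof (Section~\ref{subsection: alternative proof - spectrum of srg}). Both you and the paper reduce to the same situation: a counterexample with no universal vertex must be regular (your degree-matching argument for nonadjacent pairs is the same as the paper's), hence a strongly regular graph in $\srg{n}{k}{1}{1}$ with $n=k^2-k+1$. From there you compute $\vartheta(\Gr{G})$ and $\vartheta(\CGr{G})$ from Theorem~\ref{theorem: theta of srg}, but --- as you yourself observe in your closing paragraph --- these values do not by themselves forbid the graph, so you fall back on the spectrum $\{k,\pm\sqrt{k-1}\}$ and the vanishing trace to force $\sqrt{k-1}\in\naturals$ and then $\sqrt{k-1}\mid 1$, i.e.\ $k=2$. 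That integrality argument is sound (it is essentially the classical Erd\H{o}s--R\'{e}nyi--S\'{o}s/Aigner--Ziegler route, and the paper's Section~\ref{subsection: alternative proof - spectrum of srg} runs the same computation via the multiplicities $m_{1,2}$), so the $\vartheta$-computation in your write-up is decorative rather than load-bearing. The paper's first proof closes the gap you identified differently and genuinely uses the closed form: it shows combinatorially that $\omega(\Gr{G})=3$ (a triangle exists, and $\CG{4}$-freeness rules out $\CoG{4}$) and that $\chi(\Gr{G})=3$ (each edge lies on exactly one triangle, permitting a greedy $3$-coloring), so the sandwich theorem pins $\vartheta(\CGr{G})=3$ exactly, while \eqref{eq1: 01.03.2025} gives $\vartheta(\CGr{G})=1+k/\sqrt{k-1}>3$ for all $k\geq 3$ --- no integrality needed. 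What your route buys is independence from the clique/chromatic-number computation; what the paper's route buys is a contradiction extracted purely from the real value of $\vartheta(\CGr{G})$, which is the novelty it advertises. Your explicit treatment of the universal-vertex case (a perfect matching on $\V{\Gr{G}}\setminus\{w\}$ closing triangles with $w$) is a welcome addition that the paper leaves largely implicit.
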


A human interpretation of Theorem~\ref{thm: friendship theorem} is well known.
Assume that there is a party with $n$ people, where every pair of individuals has precisely one common friend at the party.
Theorem~\ref{thm: friendship theorem} asserts that one of these people is a friend of everyone.
Indeed, construct a graph whose vertices represent the $n$ people, and every two vertices are adjacent
if and only if they represent two friends. The claim then follows from Theorem~\ref{thm: friendship theorem}.

\begin{remark}[On Theorem~\ref{thm: friendship theorem}]
\label{remark: on the friendship theorem}
{\em The windmill graph (see Figure~\ref{fig:windmill_graph}) has the desired property of the friendship theorem
and it turns out to be the only graph with that property.
Notably, the friendship theorem does not hold for infinite graphs. Indeed, for an inductive construction
of a counterexample, one may start with a 5-cycle $\CG{5}$, and repeatedly add a common neighbor for every pair
of vertices that does not yet have one. This process results in a countably infinite friendship graph without
a vertex adjacent to all other vertices.}
\end{remark}

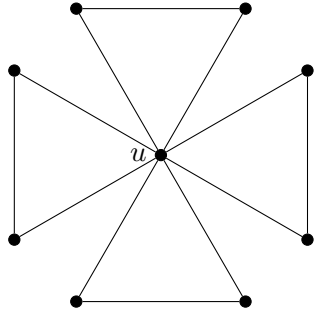
\begin{figure}[hbt]
    \centering
    \begin{tikzpicture}[scale=1.5, every node/.style={draw, circle, inner sep=1.5pt, fill=black}]
        \node[label=left:$u$] (u) at (0,0) {};

        \foreach \i in {0,1,2,3} {
            \node (a\i) at ({90*\i+30}:1.5) {};
            \node (b\i) at ({90*\i-30}:1.5) {};

            \draw (u) -- (a\i) -- (b\i) -- (u);
        }
    \end{tikzpicture}
    \caption{\centering{Windmill graph (also known as friendship graph) consisting of multiple triangles sharing a common central vertex $u$.}}
    \label{fig:windmill_graph}
\end{figure}

\subsection{A new alternative proof of the friendship theorem relying on the Lov\'{a}sz $\vartheta$-function}
\label{subsection: alternative proof - Lovasz function}
This subsection presents an alternative proof of Theorem~\ref{thm: friendship theorem},
utilizing the Lov\'{a}sz $\vartheta$-function of strongly regular graphs (and their complements)
in Theorem~\ref{theorem: theta of srg}. Recall that, by Proposition~\ref{proposition: srg complement},
the complements of strongly regular graphs are strongly regular as well.

\begin{proof}
Suppose the assertion is false, and let $\Gr{G}$ be a counterexample -- a finite graph in which any two distinct
vertices have a single common neighbor, yet no vertex in $\Gr{G}$ is adjacent to all other vertices.
A contradiction is obtained as follows: \newline
The first step shows that the graph $\Gr{G}$ is regular, as proved in Chapter~44 of \cite{AignerZ18}.
We provide a variation of that regularity proof, and then take a different approach, relying on graph invariants such as
$\omega(\Gr{G}), \chi(\Gr{G})$, and $\vartheta(\CGr{G})$.

\begin{itemize}
\item To assert the regularity of $\Gr{G}$, it is first proved that nonadjacent vertices in $\Gr{G}$ have equal
degrees, i.e., $d(u) = d(v)$ if $\{u,v\} \not\in \E{\Gr{G}}$.
\item The given hypothesis yields that $\Gr{G}$ is a connected graph (having a diameter of at most~2).
Let $\{u, v\} \not\in \E{\Gr{G}}$, and let $\Ng{u}$ and $\Ng{v}$ denote,
respectively, the sets of neighbors of the nonadjacent vertices $u$ and $v$.
\item Let $f \colon \Ng{u} \to \Ng{v}$ be the injective function
where every $x \in \Ng{u}$ is mapped to the unique $y \in \Ng{x} \cap \Ng{v}$.
Indeed, if $z \in \Ng{u} \setminus \{x\}$ satisfies $f(z) = y$, then $x$
and $z$ share two common neighbors (namely, $y$ and $u$), which contradicts the assumption of the theorem.
\item Since $f \colon \Ng{u} \to \Ng{v}$ is injective, it follows that
$\card{\Ng{u}} \leq \card{\Ng{v}}$.
\item By symmetry, swapping $u$ and $v$
(as nonadjacent vertices of the undirected graph $\Gr{G}$) also yields $\card{\Ng{v}} \leq \card{\Ng{v}}$,
so we get $d(u) = \card{\Ng{u}} = \card{\Ng{v}} = d(v)$ for all vertices $u, v \in \V{\Gr{G}}$ such that
$\{u, v\} \not\in \E{\Gr{G}}$.
\item To complete the proof that $\Gr{G}$ is regular, let $u$ and $v$ be fixed, nonadjacent vertices in $\Gr{G}$.
Consequently, $d(u) = d(v)$.
By the assumption of the theorem, except for one vertex, all other vertices are either nonadjacent to $u$ or $v$.
Hence, except for that vertex, all these vertices must have identical degrees by what we already proved.
\item Finally, by our further assumption (later leading to a contradiction), since
there is no vertex in $\Gr{G}$ that is adjacent to all other vertices, also the single vertex that is adjacent to $u$ and $v$
has a nonneighbor in $\Gr{G}$, so it also should have an identical degree to all the degrees of the other vertices
(by what is proved in the previous item). Consequently, $\Gr{G}$ is a regular graph.
\end{itemize}

From that point, our proof proceeds differently.
\begin{itemize}
\item Let $\Gr{G}$ be a $k$-regular graph on $n$ vertices. By the theorem's assumption, every two distinct vertices have exactly one common neighbor,
so $\Gr{G}$ is either a connected, strongly regular graph in the family $\srg{n}{k}{1}{1}$ if $k \geq 3$, or $\Gr{G} = \CoG{3}$ if $k=2$, or $\Gr{G} = \CoG{1}$
if $k=0$ (recall that complete graphs are excluded from the family of strongly regular graphs).
\item First, $k=0$ and $k=2$ are excluded since complete graphs do not contradict our assumption of having a vertex not adjacent to
all other vertices. Hence, let $k \geq 3$.
\item Every two adjacent vertices in $\Gr{G}$ share a common neighbor, so $\Gr{G}$ contains a triangle.
Moreover, $\Gr{G}$ is $\CG{4}$-free since every two vertices have exactly one common neighbor,
so it must be also $\CoG{4}$-free. Hence, $\omega(\Gr{G}) = 3$.
\item We next show that $\chi(\Gr{G}) = 3$.
First, $\chi(\Gr{G}) \geq \omega(\Gr{G}) = 3$. We also need to show that $\chi(\Gr{G}) \leq 3$, which means that
three colors suffice to color all the vertices of $\Gr{G}$ in a way that no two adjacent vertices are assigned
the same color. This can be done recursively by observing that each edge belongs to exactly one triangle (otherwise,
the endpoints of that edge would have more than one common neighbor, which is not allowed). Moreover, each newly colored
vertex always completes a properly colored triangle, ensuring that the coloring remains valid at every step of the recursion
without requiring a fourth color.
\item By the sandwich theorem $\omega(\Gr{G}) \leq \vartheta(\CGr{G}) \leq \chi(\Gr{G})$, so $\omega(\Gr{G}) = \chi(\Gr{G}) = 3$
implies that $\vartheta(\CGr{G})=3$.
\item By Theorem~\ref{theorem: theta of srg}, where $\Gr{G}$ is a strongly
regular graph in the family $\srg{n}{k}{1}{1}$, we also get
\begin{align}
\label{eq1: 01.03.2025}
\vartheta(\CGr{G}) = 1 + \frac{k}{\sqrt{k-1}}.
\end{align}
This leads to a contradiction since, for all $k \geq 3$,
\begin{align}
& (k-2)^2 > 0, \nonumber \\
\Leftrightarrow \; & k^2 > 4(k-1), \nonumber \\
\Leftrightarrow \; & 1  + \frac{k}{\sqrt{k-1}} > 3,
\end{align}
which completes the proof of the theorem by contradiction. Note that every edge in $\Gr{G}$ lies on a triangle (by the theorem's assumption),
and $\Gr{G}$ consists of edge-disjoint triangles since it is $\CG{4}$-free.
\end{itemize}
\end{proof}

\subsection{Another proof of the friendship theorem relying on spectral properties of strongly regular graphs}
\label{subsection: alternative proof - spectrum of srg}

This subsection presents an alternative proof of the friendship theorem, relying on the adjacency spectrum
of strongly regular graphs. This second proof forms a variation of the proof provided in Chapter~44 of \cite{AignerZ18}.

From the point in Section~\ref{subsection: alternative proof - Lovasz function} where we get, by contradiction,
that $\Gr{G}$ is a strongly regular graph in the family $\srg{n}{k}{1}{1}$ with $k \geq 3$,
it is possible to get a contradiction in an alternative way, which relies on the
following known result for strongly regular graphs.

\begin{proposition}[Feasible parameters of strongly regular graphs]
\label{proposition: Feasible Parameters of Strongly Regular Graphs}
{\em Let $\Gr{G}$ be a strongly regular graph in the family $\srg{n}{d}{\lambda}{\mu}$. Then,
the following holds:
\begin{enumerate}[(1)]
\item $(n-d-1) \, \mu = d \, (d-\lambda-1)$.
\item $n-1 - \frac{2d+(n-1)(\lambda-\mu)}{\sqrt{(\lambda-\mu)^2+4(d-\mu)}}$ is a nonnegative even integer.
\item $6 | (n d \lambda)$.
\end{enumerate}}
\end{proposition}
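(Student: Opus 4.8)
The plan is to treat the three assertions separately, since each rests on a different elementary tool: double counting for~(1), the spectral decomposition of a strongly regular graph for~(2), and triangle counting for~(3). For part~(1) I would fix an arbitrary vertex $v$ and count, in two ways, the edges joining a neighbor of $v$ to a non-neighbor of $v$. On one side, each of the $d$ neighbors $u$ of $v$ is adjacent to $v$ and shares exactly $\lambda$ common neighbors with $v$; hence exactly $d-\lambda-1$ of $u$'s neighbors lie outside $\Ng{v}\cup\{v\}$, giving $d(d-\lambda-1)$ such edges. On the other side, each of the $n-d-1$ non-neighbors $w$ of $v$ has exactly $\mu$ common neighbors with $v$, all lying in $\Ng{v}$, giving $(n-d-1)\mu$ such edges. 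Equating the two counts yields the identity.

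For part~(2) I would pass to the adjacency matrix $\mathbf{A}$ and use the defining relation $\mathbf{A}^2 = (d-\mu)\mathbf{I} + (\lambda-\mu)\mathbf{A} + \mu\mathbf{J}_n$, which encodes the diagonal, adjacent, and non-adjacent cases at once. On the all-ones vector $\mathbf{A}$ acts as multiplication by $d$; on its orthogonal complement $\mathbf{J}_n$ vanishes, so the remaining eigenvalues are the roots $r,s = \tfrac12\bigl((\lambda-\mu)\pm t\bigr)$ of $x^2-(\lambda-\mu)x-(d-\mu)=0$, with $t$ as in \eqref{eq3: 15.02.25}. Writing $m_r,m_s$ for their multiplicities, the constraints $m_r+m_s=n-1$ and $\mathrm{Trace}(\mathbf{A}) = d + m_r r + m_s s = 0$ form a linear system whose solution gives $m_r-m_s = -\bigl(2d+(n-1)(\lambda-\mu)\bigr)/t$. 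Adding this to $m_r+m_s=n-1$ shows that the quantity in the statement equals exactly $2m_r$; since $m_r$ is the multiplicity of an eigenvalue, it is a nonnegative integer, so $2m_r$ is a nonnegative even integer. I would note that $t>0$ always holds, because $\lambda\le d-1<d$ rules out the degenerate case $\lambda=\mu=d$, so the division by $t$ is legitimate.

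For part~(3) I would count triangles. Each edge $\{u,v\}$ lies in exactly $\lambda$ triangles, since its $\lambda$ common neighbors are precisely the possible apexes; as $\Gr{G}$ has $\tfrac12 nd$ edges and each triangle contributes three edges, the total number of triangles equals $\tfrac16 nd\lambda$. Being the cardinality of a set of subgraphs, this number is a nonnegative integer, whence $6 \mid nd\lambda$.

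The routine parts are~(1) and~(3); the step demanding the most care is the multiplicity computation in~(2), where one must correctly identify the two non-trivial eigenvalues and verify that the displayed expression is precisely twice one multiplicity, rather than a single multiplicity or their difference. The only conceptual point to watch is the uniform treatment of the conference-graph case, when $t$ is irrational: no special argument is needed for the claim as stated, since the identity that the displayed expression equals $2m_r$ holds regardless, and the integrality of $m_r$ is automatic.
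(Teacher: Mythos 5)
Your proposal is correct and follows essentially the same route as the paper: a double count of the edges between the neighbors and non-neighbors of a fixed vertex for (1), the eigenvalue-multiplicity formula obtained from $\mathbf{A}^2=(d-\mu)\mathbf{I}+(\lambda-\mu)\mathbf{A}+\mu\mathbf{J}_n$ together with the trace condition for (2), and the triangle count $\tfrac16 nd\lambda$ for (3). The only difference is one of detail: you derive the multiplicities explicitly (and justify $t>0$), whereas the paper simply cites the known formula from the spectral theory of strongly regular graphs.
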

\begin{proof}
This holds by relying on some basic properties of strongly regular graphs (see Chapter~21 of \cite{vanLintW02})
as follows.
\begin{itemize}
\item Item~1 is a combinatorial equality for strongly regular graphs. It is obtained by
letting the vertices in $\Gr{G}$ lie in three levels, where an arbitrarily selected vertex
is at the root in Level~0, its $d$ neighbors lie in Level~1, and all the others lie in Level~2.
The equality in Condition~1 then follows from a double counting argument
of the number of edges between Levels~1 and~2.
\item Item~2 holds by the integrality and nonnegativity of the multiplicities $m_{1,2}$ of the second-largest and least eigenvalues
of the adjacency matrix, respectively, which satisfy
\begin{align}
\label{eig-multiplicities-SRG}
0 \leq 2 m_{1,2} = n-1 \mp \frac{2d+(n-1)(\lambda-\mu)}{\sqrt{(\lambda-\mu)^2+4(d-\mu)}}.
\end{align}
\item Item~3 holds since the number of triangles in $\Gr{G}$ is given by $\tfrac16 nd \lambda$.
Indeed, every vertex has $d$ neighbors, every pair of adjacent vertices shares exactly $\lambda$ common neighbors
(forming a triangle), and each triangle is counted six times due to the six possible permutations of its three vertices.
\end{itemize}
\end{proof}
\begin{itemize}
\item By applying Item~1 in Proposition~\ref{proposition: Feasible Parameters of Strongly Regular Graphs}
to $\Gr{G}$ with the parameters $d=k$ and $\lambda = \mu = 1$, we get $n = k^2 - k+1$. This does not lead
yet to a contradiction since summing over all the degrees of the neighbors of an arbitrary vertex $u$ in
$\Gr{G}$ gives $k^2$.
Then, by the assumption of the theorem that every two vertices have exactly one common neighbor, it
follows that the above summation counts each vertex in $\Gr{G}$ exactly once, except for vertex
$u$ that is counted $k$ times. Hence, indeed $n = k^2-k+1$.
\item By Item~2 in Proposition~\ref{proposition: Feasible Parameters of Strongly Regular Graphs}
with $d=k$ and $\lambda = \mu = 1$, it follows that $\frac{k}{\sqrt{k-1}} \in \naturals$. Consequently,
$(k-1) | k^2$. Since $k^2 = (k-1)(k+1)+1$, it follows that $(k-1)|1$, so $k=2$.
If $k=2$, the only graph that satisfies the condition of Theorem~\ref{thm: friendship theorem}
is $\Gr{G} = \CoG{3}$, which also satisfies the assertion of the theorem. Hence, this argument
contradicts the assumption in the proof since it led to the conclusion that $\Gr{G}$ is a strongly
regular graph in the family $\srg{n}{k}{1}{1}$ with $k \geq 3$, which was subsequently rejected
(as $(k-1) \hspace*{-0.1cm} \not| \, 1$).
\end{itemize}

\subsection{Further discussions and insights on extensions of the friendship theorem}
\label{subsection:  extensions of the friendship theorem}

This subsection considers further generalizations and provides discussions on the friendship theorem and its proofs.
The following discussions are presented in Remarks~\ref{remark1: 01.03.25}--\ref{remark3: 01.03.25} and are supplemented
by examples that examine strongly regular graphs of different orders, both small and large. In particular,
Remark~\ref{remark3: 01.03.25} builds on our proof technique, which is based on the Lov\'{a}sz $\vartheta$-function of
strongly regular graphs, to offer insights into the extended friendship theorem studied in \cite{MertziosU16}. It also
establishes additional results, which are further illustrated through
Examples~\ref{example: cont. to remark 3}--\ref{example: complements of symplectic polar graphs}.

\begin{remark}
\label{remark1: 01.03.25}
{\em By the frienship theorem (Theorem~\ref{thm: friendship theorem}), every finite graph in which each
pair of distinct vertices shares exactly one common neighbor must be isomorphic to a windmill graph (see
Figure~\ref{fig:windmill_graph}). Consequently, its order $n$ must be an odd integer, and the size of
the graph is given by $\card{\E{\Gr{G}}} = \tfrac32 (n-1)$. By definition, a graph $\Gr{G}$
satisfying the assumption of Theorem~\ref{thm: friendship theorem} is $\CG{4}$-free.
A combinatorial proof, based on double counting, asserts that the size of a $\CG{4}$-free graph
$\Gr{G}$ on $n$ vertices satisfies (see pp.~200--201 in \cite{AignerZ18})
\begin{align}
\label{eq: UB - size of a C4-free graph}
\card{\E{\Gr{G}}} \leq \Bigl\lfloor \tfrac{1}{4} n (1+ \sqrt{4n-3}) \Bigr\rfloor.
\end{align}
Furthermore, it has been shown that this upper bound on the size of $\Gr{G}$ is nearly tight in
the sense that if $p$ is a prime number, and $n = p^2+p+1$, then a specific construction of a
$\CG{4}$-free graph on $n$ vertices has a size that is given by (see pp.~201--202 in \cite{AignerZ18})
\begin{align}
\label{eq: achievability - size of a C4-free graph}
\card{\E{\Gr{G}}} = \tfrac{1}{4} (n-1) (1+ \sqrt{4n-3}).
\end{align}
The significant difference between the size of a windmill graph on $n$ vertices, scaling linearly as $\tfrac32 n$,
and the largest size of a general $\CG{4}$-free graph on $n$ vertices, scaling super-linearly as $\tfrac12 n^{\frac32}$,
stems from additional structural constraint on $\Gr{G}$. Specifically, beyond being a $\CG{4}$-free graph, $\Gr{G}$ must
satisfy a stronger condition that uniquely determines it (up to isomorphism) as a windmill graph.}
\end{remark}

\begin{remark}
\label{remark2: 01.03.25}
{\em The friendship theorem was recently generalized in \cite{ChoiCK25} to directed graphs (digraphs),
allowing for asymmetry in liking relationships. The digraph version of the friendship theorem (see Theorem~1.1
of \cite{ChoiCK25}) states that if a finite, simple, directed graph has the property that every pair of
vertices has exactly one common out-neighbor, then it must be either:
\begin{enumerate}[(1)]
\item A $k$-regular digraph -- where each vertex has out-degree $k$ and in-degree $k$ -- on
$n=k^2-k+1$ vertices with $k \geq 2$, or
\item A so-called "fancy wheel digraph," which consists of a disjoint union of directed cycles
with an additional vertex that has arcs to and from every vertex on these cycles.
\end{enumerate}
Notably, the order $n$ of the $k$-diregular digraph matches that of the
undirected graph $\Gr{G}$ in the second proof (Section~\ref{subsection: alternative proof - spectrum of srg}),
in which it was established -- prior to reaching the contradiction -- that $\Gr{G}$ is a $k$-regular graph
of order $n=k^2-k+2$.}
\end{remark}

\begin{remark}
\label{remark3: 01.03.25}
{\em As a possible extension of the friendship theorem, Proposition~1 of \cite{MertziosU16} states that if $\Gr{G}$ is a
finite, simple, and undirected graph such that every pair of vertices has exactly $\ell$ common neighbors for some fixed $\ell \geq 2$,
then $\Gr{G}$ is a regular graph. Consequently, $\Gr{G}$ is a strongly regular graph in the family
$\srg{n}{d}{\ell}{\ell}$. Furthermore, by Item~1 of Proposition~\ref{proposition: Feasible Parameters of Strongly Regular Graphs}
(here), it follows by substituting $\lambda = \mu = \ell$ that the parameters $n, d, \ell$ are related by the equation
$(n-1) \ell = d(d-1)$. As an easy generalization of \eqref{eq1: 01.03.2025} in our proof of the friendship theorem,
it follows from Theorem~\ref{theorem: theta of srg} that
\begin{align}
\label{eq2: 01.03.2025}
\vartheta(\CGr{G}) = 1 + \frac{d}{\sqrt{d-\ell}}.
\end{align}
Similar to our proof of the friendship theorem, which is based on the sandwich theorem stating that
$\omega(\Gr{G}) \leq \vartheta(\CGr{G}) \leq \chi(\Gr{G})$, it follows that the clique and chromatic
numbers of $\Gr{G}$ satisfy, respectively,
\begin{align}
\label{eq3: 01.03.2025}
\omega(\Gr{G}) \leq 1 + \biggl\lfloor \frac{d}{\sqrt{d-\ell}} \biggr\rfloor, \\
\label{eq4: 01.03.2025}
\chi(\Gr{G}) \geq 1 + \biggl\lceil \frac{d}{\sqrt{d-\ell}} \biggr\rceil.
\end{align}
Furthermore, by Corollary~\ref{corollary: property - SRGs or v.t.}, it follows from \eqref{eq2: 01.03.2025} that
\begin{align}
\label{eq5: 01.03.2025}
\vartheta(\Gr{G}) = \frac{n \sqrt{d-\ell}}{d + \sqrt{d-\ell}}.
\end{align}
Similarly, by the sandwich theorem, the independence number of $\Gr{G}$ and the chromatic number of its complement $\CGr{G}$ satisfy
the following bounds:
\begin{align}
\label{eq6: 01.03.2025}
\indnum{\Gr{G}} \leq \biggl\lfloor \frac{n \sqrt{d-\ell}}{d + \sqrt{d-\ell}} \biggr\rfloor, \\
\label{eq7: 01.03.2025}
\chi(\CGr{G}) \geq \biggl\lceil \frac{n \sqrt{d-\ell}}{d + \sqrt{d-\ell}} \biggr\rceil.
\end{align}
Hence, our proposed proof of the friendship theorem that relies on the Lov\'{a}sz $\vartheta$-function of strongly regular graphs,
combined with Theorem~\ref{theorem: theta of srg} and Corollary~\ref{corollary: property - SRGs or v.t.} (here), and Proposition~1 of
\cite{MertziosU16}, lead to bounds on graph invariants of $\Gr{G}$ and $\CGr{G}$ with respect to the extended version of the friendship
theorem of \cite{MertziosU16} for an arbitrary $\ell \geq 2$.}
\end{remark}

The next three examples, preceded by definitions and notation, refer to Remark~\ref{remark3: 01.03.25}.
\begin{definition}[Cartesian product of graphs]
\label{def:cartesian product of graphs}
{\em Let $\Gr{G}$ and $\Gr{H}$ be graphs.
The Cartesian product, denoted by $\Gr{G} \, \square \, \Gr{H}$, is a graph with a vertex set
that is given by $\V{\Gr{G} \, \square \, \Gr{H}} = \V{\Gr{G}} \times \V{\Gr{H}}$
(i.e., it is equal to the Cartesian product of the vertex sets of $\Gr{G}$ and $\Gr{H}$),
and its edge set is characterized by the property that distinct vertices $(g, h)$ and
$(g', h')$, where $g, g' \in \V{\Gr{G}}$ and $h, h' \in \V{\Gr{H}}$, are adjacent in
$\Gr{G} \, \square \, \Gr{H}$ if and only if one of the following conditions holds: \newline
(1) $g = g'$ and $\{h, h'\} \in \E{\Gr{H}}$, or \; (2) $\{g, g'\} \in \E{\Gr{G}}$ and $h = h'$.}
\end{definition}

\begin{definition}[Line Graph]
\label{definition: line graphs}
{\em Let $\Gr{G} = (\Vertex, \Edge)$ be a graph. The line graph of $\Gr{G}$, denoted by $\mathrm{L}(\Gr{G})$,
is a graph whose vertices are the edges of $\Gr{G}$, and two vertices are adjacent in the line graph
$\mathrm{L}(\Gr{G})$ if the corresponding edges are incident in $\Gr{G}$.}
\end{definition}

\begin{example}
\label{example: cont. to remark 3}
{\em The Shrikhande graph and the Cartesian product of two complete graphs on~4 vertices, $\CoG{4} \square \CoG{4}$,
are the two nonisomorphic strongly regular graphs in the family $\srg{16}{6}{2}{2}$, where each pair of distinct vertices
has exactly two common neighbors. Note that the parameters $n=16$, $d=6$, and $\ell=2$ indeed satisfy the equality
$(n-1)\ell=d(d-1)$. Let $\Gr{G}_1$ and $\Gr{G}_2$ denote, respectively, these two graphs. By \eqref{eq3: 01.03.2025},
\eqref{eq4: 01.03.2025}, \eqref{eq6: 01.03.2025}, and \eqref{eq7: 01.03.2025}, we get
\begin{align}
\label{eq8: 01.03.2025}
\indnum{\Gr{G}} \leq 4, \quad \omega(\Gr{G}) \leq 4, \quad \chi(\Gr{G}) \geq 4, \quad \chi(\CGr{G}) \geq 4,
\end{align}
which hold for $\Gr{G}_1$ and $\Gr{G}_2$. A comparison with their exact values (computed by \cite{SageMath}) gives
\begin{align}
\label{eq9: 01.03.2025}
& \indnum{\Gr{G}_1} = 4, \quad \omega(\Gr{G}_1) = 3, \quad \chi(\Gr{G}_1) = 4, \quad \chi(\CGr{G}_1) = 6, \\
\label{eq10: 01.03.2025}
& \indnum{\Gr{G}_2} = 4, \quad \omega(\Gr{G}_2) = 4, \quad \chi(\Gr{G}_2) = 4, \quad \chi(\CGr{G}_2) = 4,
\end{align}
so the four bounds on the graph invariants in \eqref{eq8: 01.03.2025} are tight for $\Gr{G} = \Gr{G}_2$.}
\end{example}

\begin{example}
\label{example 2: cont. to remark 3}
{\em For $n \geq 4$, the line graph of the complete graph on $n$ vertices, $\Gr{G} = \mathrm{L}(\CoG{n})$,
is a strongly regular graph in the family $\srg{\tfrac12 n(n-1)}{2(n-2)}{n-2}{4}$. Consequently, the line graph
$\mathrm{L}(\CoG{6})$ is in the family $\srg{15}{8}{4}{4}$, so every pair of distinct vertices in that graph has exactly
four common neighbors. Using the SageMath software \cite{SageMath} gives $\indnum{\Gr{G}} = 3$, $\omega(\Gr{G}) = 5$, $\chi(\Gr{G})=5$,
and $\chi(\CGr{G}) = 4$. Substituting the graph parameters $n=15$, $d=8$, and $\ell=4$ into \eqref{eq3: 01.03.2025}--\eqref{eq7: 01.03.2025}
shows that the three bounds in \eqref{eq3: 01.03.2025}, \eqref{eq4: 01.03.2025}, and \eqref{eq6: 01.03.2025} are tight. However,
the bound in \eqref{eq7: 01.03.2025} is not tight as it gives $\chi(\CGr{G}) \geq 3$, whereas its exact value is~4.}
\end{example}

\begin{example}[Complements of symplectic polar graphs]
\label{example: complements of symplectic polar graphs}
{\em The symplectic polar graphs are strongly regular graphs, belonging to the families
$\srg{v}{k}{\lambda}{\mu}$, where
\begin{align}
\label{eq: symplectic}
v = \frac{q^{2n}-1}{q-1}, \quad k = \frac{q(q^{2n-2}-1)}{q-1}, \quad
\lambda = \frac{q^{2n-2}-1}{q-1} - 2, \quad
\mu = \frac{q^{2n-2}-1}{q-1},
\end{align}
for all $n \in \naturals$ and $q \geq 2$ that is a prime power. Consequently,
$\lambda = \mu-2$ and $\mu = \frac{k}{q}$. That symplectic polar graph
is denoted by $\Gr{G} = \Sp(2n,q)$ (see Section~2.5 in \cite{BrouwerM22}).
The complement graph of a strongly regular graph in the family $\srg{\nu}{k}{\lambda}{\mu}$
is a strongly regular graph in the family $\srg{v}{v-k-1}{v-2k+\mu-2}{v-2k+\lambda}$ (see
Proposition~\ref{proposition: srg complement}).
Substituting $v, k, \lambda, \mu$ in \eqref{eq: symplectic}
gives that $\Gr{H} = \CGr{G}$ is a strongly regular graph in the family
\begin{align}
\label{eq: H complement symplectic graph}
\vspace*{0.1cm}
\SRG \, \Biggl(\frac{q^{2n}-1}{q-1}, q^{2n-1}, q^{2n-2} (q-1), q^{2n-2} (q-1)\Biggr).
\end{align}
Consequently, the family of the complements of the symplectic polar graphs,
$\Gr{H} = \overline{\Sp(2n,q)}$, where $n \in \naturals$ and $q \geq 2$ is a prime power,
forms an infinite family of strongly regular graphs which are characterized by the property
that every pair of vertices in such a graph has an identical number of common neighbors,
whose value is given by $\ell = q^{2n-2} (q-1)$.

\begin{table}[hbt]
\centering
\caption{(Example~\ref{example: complements of symplectic polar graphs})
Complements of symplectic polar graphs, the families of strongly regular graphs to
which they belong (see \eqref{eq: H complement symplectic graph}), the fixed number ($\ell$)
of common neighbors for each pair of distinct vertices, the values of the Lov\'{a}sz
$\vartheta$-function for these graphs and their complements, and their exact
matchings with the independence and chromatic numbers of these graphs.}
\label{Table: complements of symplectic polar graphs}
\vspace*{0.1cm}
\renewcommand{\arraystretch}{1.5}
\centering
\begin{tabular}{cclrrrrr}
\hline
$n$ & $q$ & $\hspace*{0.7cm} \Gr{H} = \overline{\Sp(2n,q)}$ & $\ell$ & $\vartheta(\Gr{H})$ & $\indnum{\Gr{H}}$ & $\vartheta(\CGr{H})$ & $\chi(\Gr{H})$ \\[0.1cm] \hline
$3$ & $2$ & $\srg{63}{32}{16}{16}$ & $16$ & $7$ & $7$ & $9$ & $9$\\
$3$ & $3$ & $\srg{364}{243}{162}{162}$ & $162$ & $13$ & $13$ & $28$ & $28$ \\
$3$ & $4$ & $\srg{1365}{1024}{768}{768}$ & $768$ & $21$ & $21$ & $65$ & $65$ \\
$3$ & $5$ & $\srg{3906}{3125}{2500}{2500}$ & $2500$ & $31$ & $31$ & $126$ & $126$ \\
$3$ & $7$ & $\srg{19608}{16807}{14406}{14406}$ & $14406$ & $57$ & $57$ & $344$ & $344$ \\
$4$ & $2$ & $\srg{255}{128}{64}{64}$ & $64$ & $15$ & $15$ & $17$ & $17$ \\
$4$ & $3$ & $\srg{3280}{2187}{1458}{1458}$ &$1458$ & $40$ & $40$ & $82$ & $82$ \\
$4$ & $4$ & $\srg{21845}{16384}{12288}{12288} $ & $12288$ & $85$ & $85$ & $257$ & $257$ \\
$5$ & $2$ & $\srg{1023}{512}{256}{256}$ & $256$ & $31$ & $31$ & $33$ & $33$ \\
$5$ & $3$ & $\srg{29524}{19683}{13122}{13122}$ & $13122$ & $121$ & $121$ & $244$ & $244$ \\
$5$ & $4$ & $\srg{349525}{262144}{196608}{196608}$ & $196608$ & $341$ & $341$ & $1025$ & $1025$ \\
$6$ & $2$ & $\srg{4095}{2048}{1024}{1024}$ & $1024$ & $63$ & $63$ & $65$ & $65$ \\
$6$ & $3$ & $\srg{265720}{177147}{118098}{118098}$ & $118098$ & $364$ & $364$ & $730$ & $730$ \\
$7$ & $2$ & $\srg{16383}{8192}{4096}{4096}$ & $4096$ & $127$ & $127$ & $129$ & $129$ \\
$8$ & $2$ & $\srg{65535}{32768}{16384}{16384}$ & $16384$ & $255$ & $255$ & $257$ & $257$ \\
$9$ & $2$ & $\srg{262143}{131072}{65536}{65536}$ & $65536$ & $511$ & $511$ & $513$ & $513$
\\[0.1cm] \hline
\end{tabular}
\end{table}
In regard to their graph invariants (as illustrated in Table~\ref{Table: complements of symplectic polar graphs}),
by Section 2.5.4 of \cite{BrouwerM22},
\begin{align}
\label{eq1: 2.3.25}
\indnum{\Gr{H}} = \omega(\Gr{G}) = \frac{q^n-1}{q-1},
\end{align}
and, by Theorem~3.29 of \cite{Sason24},
\begin{align}
\label{eq2: 2.3.25}
& \vartheta(\Gr{H}) = \indnum{\Gr{H}} = \frac{q^n-1}{q-1}, \\
\label{eq3: 2.3.25}
& \vartheta(\CGr{H}) = \chi(\Gr{H}) = q^n+1.
\end{align}
Consequently, it follows that the bounds in \eqref{eq6: 01.03.2025} and \eqref{eq7: 01.03.2025},
applied to $\Gr{H}$ for its independence and chromatic numbers, are tight. Numerical results are
presented in Table~\ref{Table: complements of symplectic polar graphs}.}
\end{example}

\section{Spanning Subgraphs of Strongly Regular Graphs}
\label{section: spanning subgraphs}

A spanning subgraph is obtained by removing some edges from the original graph while preserving
all its vertices. Let $\Gr{G}$ and $\Gr{H}$ be graphs of the same order, belonging to the
strongly regular graph families $\srg{n}{d}{\lambda}{\mu}$ and $\srg{n}{d'}{\lambda'}{\mu'}$, respectively.
The specific structure of these graphs may not necessarily be specified, as multiple nonisomorphic strongly
regular graphs can exist within a given family \cite{Brouwer,BrouwerM22}. We next derive a
necessary condition for one of these graphs to be a spanning subgraph of the other, relying on the
Lov\'{a}sz $\vartheta$-functions of these graphs and their complements.

We rely on the following simple lemma, which relates the Lov\'{a}sz $\vartheta$-functions
of two graphs (not necessarily strongly regular), where one is a spanning subgraph of the other.
\begin{lemma}
\label{lemma: spanning subgraph}
{\em Let $\Gr{G}$ and $\Gr{H}$ be undirected and simple graphs with an identical vertex set.
If $\Gr{H}$ is a spanning subgraph of $\Gr{G}$, then
\begin{align}
& \vartheta(\CGr{G}) \geq \vartheta(\CGr{H}),   \label{eq1: spanning subgraph} \\
& \vartheta(\Gr{G}) \leq \vartheta(\Gr{H}).   \label{eq2: spanning subgraph}
\end{align}}
\end{lemma}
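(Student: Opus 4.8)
The plan is to use the two defining properties of the Lovász $\vartheta$-function together with the set-theoretic relationship between the edge sets of $\Gr{G}$, $\Gr{H}$, and their complements. Since $\Gr{H}$ is a spanning subgraph of $\Gr{G}$, the two graphs share the same vertex set and $\E{\Gr{H}} \subseteq \E{\Gr{G}}$. Passing to complements reverses this inclusion on the identical vertex set, so $\E{\CGr{G}} \subseteq \E{\CGr{H}}$; that is, $\CGr{H}$ is a spanning subgraph of $\CGr{G}$. This observation lets me reduce both inequalities to a single monotonicity property of $\vartheta$ under edge addition, which I establish once and then apply twice (once to the original graphs, once to their complements).

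The key step is the following monotonicity claim: if $\Gr{G}_1$ and $\Gr{G}_2$ have the same vertex set and $\E{\Gr{G}_1} \subseteq \E{\Gr{G}_2}$, then $\vartheta(\Gr{G}_1) \geq \vartheta(\Gr{G}_2)$. I would prove this directly from Definition~\ref{definition: Lovasz function}. An orthonormal representation of $\Gr{G}_2$ requires orthogonality only for pairs of distinct nonadjacent vertices of $\Gr{G}_2$; since adding edges to pass from $\Gr{G}_1$ to $\Gr{G}_2$ can only shrink the set of nonadjacent pairs, every orthonormal representation of $\Gr{G}_1$ automatically satisfies the (weaker) orthogonality constraints of $\Gr{G}_2$. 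Hence the feasible set of orthonormal representations for $\Gr{G}_1$ is contained in that for $\Gr{G}_2$, and since $\vartheta$ is a minimum over this feasible set (with the same objective and the same freedom in choosing the handle $\bf{c}$), minimizing over the larger set can only decrease the value, giving $\vartheta(\Gr{G}_2) \leq \vartheta(\Gr{G}_1)$.

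With this claim in hand, the two inequalities follow immediately. Applying the monotonicity to the pair $\Gr{H} \subseteq \Gr{G}$ (in the sense $\E{\Gr{H}} \subseteq \E{\Gr{G}}$) yields $\vartheta(\Gr{G}) \leq \vartheta(\Gr{H})$, which is \eqref{eq2: spanning subgraph}. Applying it instead to the complementary pair $\CGr{G} \subseteq \CGr{H}$ (using $\E{\CGr{G}} \subseteq \E{\CGr{H}}$ from the first paragraph) yields $\vartheta(\CGr{H}) \leq \vartheta(\CGr{G})$, which is \eqref{eq1: spanning subgraph}.

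I do not expect a serious obstacle here, as the lemma is essentially a bookkeeping exercise in the definition. The one point requiring mild care is verifying that the feasible-set inclusion is in the correct direction: it is the \emph{orthogonality constraints} that relax as edges are added, so it is the representation of the \emph{sparser} graph that qualifies as a representation of the denser graph, and the minimization therefore runs over a \emph{larger} set for the denser graph. Alternatively, one could bypass the representation argument entirely and invoke the SDP formulation \eqref{eq: SDP}: adding edges to the graph only adds constraints of the form $B_{i,j}=0$, shrinking the feasible region of the maximization and hence decreasing the optimal value $\vartheta$; this gives the same conclusion and may be stated even more succinctly.
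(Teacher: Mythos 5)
Your proposal is correct and follows essentially the same route as the paper: both prove the monotonicity of $\vartheta$ under edge addition via the inclusion of the feasible sets of orthonormal representations in \eqref{eq: Lovasz theta function} (with the SDP formulation \eqref{eq: SDP} noted as an alternative), and both transfer one inequality to the other using the fact that $\Gr{H}$ is a spanning subgraph of $\Gr{G}$ if and only if $\CGr{G}$ is a spanning subgraph of $\CGr{H}$. The directions of all inclusions are verified correctly, so nothing further is needed.
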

\begin{proof}
Inequality \eqref{eq1: spanning subgraph} follows from \eqref{eq: Lovasz theta function} since every
orthonormal representation of $\CGr{G}$ is also an orthonormal representation of $\CGr{H}$. This holds
since if $\{i,j\} \not\in \E{\CGr{H}}$ and $i \neq j$, then $\{i,j\} \in \E{\Gr{H}} \subseteq \E{\Gr{G}}$,
and therefore $\{i,j\} \not\in \E{\CGr{G}}$; hence, if ${\bf{u}}_i^{\mathrm{T}} {\bf{u}}_j = 0$ for all
$\{i,j\} \not\in \E{\CGr{G}}$, then the same also holds for all $\{i,j\} \not\in \E{\CGr{H}}$.
Inequality \eqref{eq1: spanning subgraph} also follows alternatively from the SDP problem in \eqref{eq: SDP}
since every feasible solution that corresponds to $\vartheta(\CGr{H})$ is also a feasible solution that
corresponds to $\vartheta(\CGr{G})$. Inequality \eqref{eq2: spanning subgraph} follows from \eqref{eq1: spanning subgraph}
since, by definition, $\Gr{H}$ is a spanning subgraph of $\Gr{G}$ if and only if $\CGr{G}$ is a spanning subgraph of $\CGr{H}$.
\end{proof}

\begin{remark}
\label{remark: on the two inequalities for a spanning subgraph}
{\em Let $\Gr{G}$ and $\Gr{H}$ be two undirected and simple graphs on $n$ vertices, and suppose that we do not know yet
whether $\Gr{H}$ is a spanning subgraph of $\Gr{G}$.
In light of Corollary~\ref{corollary: property - SRGs or v.t.}, it follows that if $\Gr{G}$ and $\Gr{H}$ are
either strongly regular or vertex-transitive graphs, then each of the two inequalities \eqref{eq1: spanning subgraph}
and \eqref{eq2: spanning subgraph} hold if and only if the other inequality holds.
In general, by Corollary~2 of \cite{Lovasz79_IT}, we have $\vartheta(\Gr{G}) \, \vartheta(\CGr{G}) \geq n$ and
$\vartheta(\Gr{H}) \, \vartheta(\CGr{H}) \geq n$, so, if $\Gr{H}$ is not necessarily a spanning subgraph of $\Gr{G}$,
then the satisfiability of one of the inequalities in \eqref{eq1: spanning subgraph} and \eqref{eq2: spanning subgraph}
does not necessarily imply the satisfiability of the other inequality. }
\end{remark}

Before proceeding with the application of Lemma~\ref{lemma: spanning subgraph} to strongly regular graphs,
let us consider the problem by only relying on Definition~\ref{definition: srg} of strongly regular graphs.
Let $\Gr{G}$ and $\Gr{H}$ be strongly regular graphs on $n$ vertices, belonging to the families
$\srg{n}{d}{\lambda}{\mu}$ and $\srg{n}{d'}{\lambda'}{\mu'}$, respectively. If $\Gr{H}$ is
a spanning subgraph of $\Gr{G}$, then the following three inequalities hold:
\begin{align}
\label{eq: three simple inequalities}
d > d', \quad \lambda \geq \lambda', \quad \min\{\lambda, \mu\} \geq \mu'.
\end{align}
This follows from the fact that deleting edges can only decrease the fixed degree of the vertices in a
regular graph. Moreover, since every pair of adjacent vertices in $\Gr{H}$ are also adjacent in $\Gr{G}$,
the fixed number of common neighbors of any two adjacent vertices in $\Gr{H}$ must be smaller than or equal to
the fixed number of common neighbors of the corresponding vertices in $\Gr{G}$. Furthermore, the fixed number
of common neighbors of any two nonadjacent vertices in $\Gr{H}$, which are either adjacent or nonadjacent
vertices in $\Gr{G}$, cannot be larger than the minimum between the fixed numbers of common neighbors of
adjacent or nonadjacent vertices in $\Gr{G}$.

We obtain a further necessary condition, which specifies Lemma~\ref{lemma: spanning subgraph} for strongly
regular graphs.

\begin{corollary}
\label{corollary: necessary cond.}
{\em Let $\Gr{G}$ and $\Gr{H}$ be strongly regular graphs on $n$ vertices, belonging to the families
$\srg{n}{d}{\lambda}{\mu}$ and $\srg{n}{d'}{\lambda'}{\mu'}$, respectively. Then, a necessary
condition for $\Gr{H}$ to be a spanning subgraph of $\Gr{G}$ is that their parameters satisfy the
inequality
\begin{align}
\label{eq: necessary cond.}
\frac{d}{d'} \; \frac{\mu'-\lambda' + \sqrt{(\mu'-\lambda')^2 + 4(d'-\mu')}}{\mu-\lambda + \sqrt{(\mu-\lambda)^2 + 4(d-\mu)}} \geq 1.
\end{align}}
\end{corollary}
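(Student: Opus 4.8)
The plan is to combine the spanning-subgraph inequality of Lemma~\ref{lemma: spanning subgraph} with the closed-form expression for the Lov\'{a}sz $\vartheta$-function of the complement of a strongly regular graph furnished by Theorem~\ref{theorem: theta of srg}. Since $\Gr{H}$ is assumed to be a spanning subgraph of $\Gr{G}$, inequality~\eqref{eq1: spanning subgraph} yields $\vartheta(\CGr{G}) \geq \vartheta(\CGr{H})$, and this single inequality already carries all the information required; there is no need to invoke \eqref{eq2: spanning subgraph} separately.

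First I would introduce the shorthands $t \triangleq \sqrt{(\mu-\lambda)^2 + 4(d-\mu)}$ and $t' \triangleq \sqrt{(\mu'-\lambda')^2 + 4(d'-\mu')}$, matching \eqref{eq3: 15.02.25} applied to $\Gr{G}$ and to $\Gr{H}$, respectively. Substituting the closed form~\eqref{eq2: 15.02.25} for both $\vartheta(\CGr{G})$ and $\vartheta(\CGr{H})$ into $\vartheta(\CGr{G}) \geq \vartheta(\CGr{H})$ turns the inequality into
\begin{align}
1 + \frac{2d}{t+\mu-\lambda} \geq 1 + \frac{2d'}{t'+\mu'-\lambda'}.
\end{align}
The additive constant $1$ and the common factor $2$ cancel from both sides, leaving $\frac{d}{t+\mu-\lambda} \geq \frac{d'}{t'+\mu'-\lambda'}$. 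Cross-multiplying and rewriting $t$ and $t'$ explicitly then reproduces exactly the claimed inequality~\eqref{eq: necessary cond.}, since $t'+\mu'-\lambda' = \mu'-\lambda' + \sqrt{(\mu'-\lambda')^2 + 4(d'-\mu')}$ and $t+\mu-\lambda = \mu-\lambda + \sqrt{(\mu-\lambda)^2 + 4(d-\mu)}$.

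The only genuine care needed — and the point I would flag as the main (though minor) obstacle — is justifying the cross-multiplication, namely that both denominators $t+\mu-\lambda$ and $t'+\mu'-\lambda'$ are strictly positive, so that clearing them preserves the direction of the inequality and the final ratio is well defined. This is immediate from Theorem~\ref{theorem: theta of srg}: because $\Gr{G}$ and $\Gr{H}$ are non-complete strongly regular graphs, we have $\vartheta(\CGr{G}) = 1 + \frac{2d}{t+\mu-\lambda} > 1$ and likewise for $\Gr{H}$, which forces $t+\mu-\lambda > 0$ and $t'+\mu'-\lambda' > 0$ (recall also that $d, d' > 0$ since the graphs are non-empty). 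With positivity secured, the remaining rearrangement is purely algebraic and the stated necessary condition follows.

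Finally, in the spirit of Remark~\ref{remark: on the two inequalities for a spanning subgraph}, I would note that one could equivalently start from $\vartheta(\Gr{G}) \leq \vartheta(\Gr{H})$ using~\eqref{eq1: 15.02.25}; by Corollary~\ref{corollary: property - SRGs or v.t.} the relation $\vartheta(\Gr{G})\,\vartheta(\CGr{G}) = n$ holds for strongly regular graphs, so the two routes yield the same condition and invoking both inequalities gains nothing.
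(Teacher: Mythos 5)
Your proposal is correct and follows essentially the same route as the paper: the paper's proof likewise combines inequality~\eqref{eq1: spanning subgraph} of Lemma~\ref{lemma: spanning subgraph} with the closed-form expression~\eqref{eq2: 15.02.25} of Theorem~\ref{theorem: theta of srg} and finishes by straightforward algebra. Your additional remark on the positivity of the denominators (equivalently, $t+\mu-\lambda = -2\lambda_{\min}(\Gr{G}) > 0$ for a nonempty graph) is a worthwhile detail that the paper leaves implicit.
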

\begin{proof}
The result is obtained by combining Theorem~\ref{theorem: theta of srg} (see \eqref{eq2: 15.02.25}, \eqref{eq3: 15.02.25})
and Lemma~\ref{lemma: spanning subgraph} (see \eqref{eq1: spanning subgraph}), followed by straightforward algebra.
\end{proof}

\begin{example}
{\em Let $\Gr{G}$ and $\Gr{H}$ be strongly regular graphs belonging to the families $\srg{45}{28}{15}{21}$ and
$\srg{45}{22}{10}{11}$, respectively (by \cite{Brouwer}, there exist strongly regular graphs with these parameters).
The inequalities in \eqref{eq: three simple inequalities} are satisfied, so the question if $\Gr{H}$
can be a spanning subgraph of $\Gr{G}$ is left open according to these inequalities. However,
by Corollary~\ref{corollary: necessary cond.}, $\Gr{H}$ (or any graph isomorphic to $\Gr{H}$) cannot be a spanning
subgraph of $\Gr{G}$ since the left-hand side of \eqref{eq: necessary cond.} is equal to
$\frac{3 \sqrt{5}+1}{11} = 0.7007 \ldots < 1$, so the necessary condition in \eqref{eq: necessary cond.} is violated.}
\end{example}

The following result extends Corollary~\ref{corollary: necessary cond.} to a more general setting, allowing for
regular spanning subgraphs of a strongly regular graph and, even more broadly, regular spanning subgraphs of regular
graphs. This extension relaxes the strong regularity requirement in Corollary~\ref{corollary: necessary cond.},
replacing it with the milder condition of regularity. This generalization comes, however, at a certain cost:
it necessitates knowledge of the second-largest and least eigenvalues of their adjacency spectra, whereas
Corollary~\ref{corollary: necessary cond.} requires to only know the parameter vectors that characterize the
strongly regular graphs $\Gr{G}$ and $\Gr{H}$.
This distinction arises because nonisomorphic strongly regular graphs in the same family
$\srg{n}{d}{\lambda}{\mu}$, for any feasible parameters $(n,d,\lambda,\mu)$, are cospectral, with their adjacency
spectra uniquely determined by these parameters. In contrast, nonisomorphic $d$-regular graphs on $n$ vertices
are not necessarily cospectral.

\begin{corollary}
\label{corollary: 26.02.2025}
{\em Let $\Gr{G}$ and $\Gr{H}$ be noncomplete and nonempty $d$-regular and $d'$-regular graphs on $n$ vertices,
with $d' \leq d$. Then, $\Gr{H}$ is a spanning subgraph of $\Gr{G}$ if the following inequality is satisfied:
\begin{align}
\label{eq1: 26.02.2025}
& \frac{n \bigl(1+\Eigval{2}{\Gr{G}}\bigr)}{n-d+\Eigval{2}{\Gr{G}}} + \frac{d'}{\Eigval{n}{\Gr{H}}} \geq 1,
\end{align}
where $\Eigval{2}{\Gr{G}}$ is the second-largest eigenvalue of the adjacency matrix of $\Gr{G}$, and
$\Eigval{n}{\Gr{H}}$ is the smallest eigenvalue of the adjacency matrix of $\Gr{H}$.}
\end{corollary}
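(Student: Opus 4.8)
The plan is to take the hypothesis \eqref{eq1: 26.02.2025} at face value and try to manufacture the spanning-subgraph relation $\E{\Gr{H}} \subseteq \E{\Gr{G}}$ from it, using Theorem~\ref{theorem: vartheta srg} to read off the two terms on the left of \eqref{eq1: 26.02.2025} as spectral bounds on Lov\'{a}sz functions and Lemma~\ref{lemma: spanning subgraph} to translate between a comparison of those functions and an edge-containment. First I would record that, since $\Gr{G}$ is $d$-regular, noncomplete, and nonempty, the rightmost inequality of \eqref{eq:21.10.22a2} gives $\vartheta(\CGr{G}) \leq \frac{n(1+\Eigval{2}{\Gr{G}})}{n-d+\Eigval{2}{\Gr{G}}}$, and, applying the leftmost inequality of \eqref{eq:21.10.22a2} to the $d'$-regular graph $\Gr{H}$, that $\vartheta(\CGr{H}) \geq 1 - \frac{d'}{\Eigval{n}{\Gr{H}}}$. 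Rearranging \eqref{eq1: 26.02.2025} shows it is exactly the inequality
\begin{align}
\label{eq: proposal rewrite}
\frac{n\bigl(1+\Eigval{2}{\Gr{G}}\bigr)}{n-d+\Eigval{2}{\Gr{G}}} \geq 1 - \frac{d'}{\Eigval{n}{\Gr{H}}},
\end{align}
that is, an upper bound on $\vartheta(\CGr{G})$ is at least a lower bound on $\vartheta(\CGr{H})$.

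To reach the asserted conclusion, the argument would have to pass through two reductions: (A) from \eqref{eq: proposal rewrite} deduce the comparison $\vartheta(\CGr{G}) \geq \vartheta(\CGr{H})$; and (B) from $\vartheta(\CGr{G}) \geq \vartheta(\CGr{H})$ deduce that $\Gr{H}$ is a spanning subgraph of $\Gr{G}$, i.e.\ a converse to inequality \eqref{eq1: spanning subgraph} of Lemma~\ref{lemma: spanning subgraph}.

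The hard part --- and what I expect to be the decisive obstacle --- is that neither reduction is valid. Reduction (A) fails because the estimates in Theorem~\ref{theorem: vartheta srg} only sandwich $\vartheta(\CGr{G})$ and $\vartheta(\CGr{H})$ between their bounds and are not tight for general regular graphs, so \eqref{eq: proposal rewrite} (an inequality between an upper bound on $\vartheta(\CGr{G})$ and a lower bound on $\vartheta(\CGr{H})$) does not force $\vartheta(\CGr{G}) \geq \vartheta(\CGr{H})$. Reduction (B) is even more fundamentally unavailable: Lemma~\ref{lemma: spanning subgraph} is a one-way implication, and a single real-valued invariant such as $\vartheta$ cannot certify the combinatorial containment $\E{\Gr{H}} \subseteq \E{\Gr{G}}$ demanded by the definition of a spanning subgraph. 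Indeed one can have $\vartheta(\CGr{G}) \geq \vartheta(\CGr{H})$, and even equality throughout, while the elementary necessary conditions in \eqref{eq: three simple inequalities} on degrees and numbers of common neighbors are violated, so that $\Gr{H}$ cannot possibly embed in $\Gr{G}$.

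What the spectral machinery does establish is the reverse chain: by Lemma~\ref{lemma: spanning subgraph}, if $\Gr{H}$ is a spanning subgraph of $\Gr{G}$ then $\vartheta(\CGr{G}) \geq \vartheta(\CGr{H})$, and sandwiching this between the two bounds above yields $\frac{n(1+\Eigval{2}{\Gr{G}})}{n-d+\Eigval{2}{\Gr{G}}} \geq \vartheta(\CGr{G}) \geq \vartheta(\CGr{H}) \geq 1 - \frac{d'}{\Eigval{n}{\Gr{H}}}$, which is precisely \eqref{eq1: 26.02.2025}. Thus \eqref{eq1: 26.02.2025} is a \emph{necessary} condition for $\Gr{H}$ to be a spanning subgraph of $\Gr{G}$, exactly parallel to Corollary~\ref{corollary: necessary cond.}; I therefore expect the intended reading to be ``only if'' rather than ``if'', and under that reading the proof is the one-line chain just displayed.
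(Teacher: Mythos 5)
Your reading of the statement is exactly right, and your closing proof is correct and matches the paper's intent. The paper does prove necessity rather than sufficiency: its proof opens with ``The necessary condition in \eqref{eq1: 26.02.2025} holds by combining inequality \eqref{eq2: spanning subgraph} with\ldots,'' and the corollary is introduced in the text as an extension of the necessary condition of Corollary~\ref{corollary: necessary cond.}, so the word ``if'' in the statement is indeed a slip for ``only if,'' just as you inferred; your structural argument for why the literal sufficiency reading is unprovable (a single scalar comparison of $\vartheta$-bounds cannot certify the edge containment $\E{\Gr{H}} \subseteq \E{\Gr{G}}$) is sound. Your one-line chain is the complement-side twin of the paper's derivation: the paper sandwiches \eqref{eq2: spanning subgraph} between the leftmost bound of \eqref{eq:21.10.22a1} for $\vartheta(\Gr{G})$ and the rightmost bound of \eqref{eq:21.10.22a1} for $\vartheta(\Gr{H})$, arriving at \eqref{eq2: 26.02.2025}, whereas you sandwich \eqref{eq1: spanning subgraph} between the bounds of \eqref{eq:21.10.22a2} and land on \eqref{eq1: 26.02.2025} directly; the paper's own remark immediately after the corollary observes that these two routes yield an identical inequality, so the two proofs are equivalent in substance.
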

\begin{proof}
The necessary condition in \eqref{eq1: 26.02.2025} holds by combining inequality \eqref{eq2: spanning subgraph}
with the lower bound on $\vartheta(\Gr{G})$ and the upper bound on $\vartheta(\Gr{H})$ in the leftmost and rightmost
inequalities in \eqref{eq:21.10.22a1}, respectively.
\end{proof}
\begin{remark}
{\em Combining inequality \eqref{eq1: spanning subgraph}
with the lower bound on $\vartheta(\CGr{H})$ and the upper bound on $\vartheta(\CGr{G})$ in the leftmost and rightmost
inequalities in \eqref{eq:21.10.22a2}, respectively, gives
\begin{align}
\label{eq2: 26.02.2025}
\frac{n-d+\Eigval{2}{\Gr{G}}}{1+\Eigval{2}{\Gr{G}}} \leq -\frac{n \Eigval{n}{\Gr{H}}}{d' - \Eigval{n}{\Gr{H}}},
\end{align}
which is equivalent to \eqref{eq1: 26.02.2025} (note that for a noncomplete regular graph $\Gr{G}$, its second-largest
eigenvalue satisfies $\Eigval{2}{\Gr{G}} > -1$, and $\Eigval{n}{\Gr{H}}<0$ for a nonempty graph $\Gr{H}$).
Corollary~\ref{corollary: 26.02.2025} consequently contains a single inequality since the two inequalities in
Lemma~\ref{lemma: spanning subgraph}, combined with \eqref{eq:21.10.22a1} and \eqref{eq:21.10.22a2}, yield an
identical inequality.}
\end{remark}

\section{Induced Subgraphs of Strongly Regular Graphs}
\label{section: induced subgraphs}

An induced subgraph is obtained by deleting vertices from the original graph, along with all their incident edges.
In analogy to Section~\ref{section: spanning subgraphs}, which is focused on spanning subgraphs of strongly
regular graphs, and an extension of that analysis to regular graphs and spanning subgraphs, this section
examines induced subgraphs in a similar manner.
Specifically, we first derive necessary conditions for a graph to be an induced subgraph of a strongly regular graph
in the family $\srg{n}{d}{\lambda}{\mu}$. This analysis relies not only on the Lov\'{a}sz $\vartheta$-function for
strongly regular graphs but also on their energy.

The graph energy is a graph invariant originally introduced in \cite{Gutman78}, exploring its application in
chemistry. A comprehensive treatment of graph energy can be found in \cite{LiSG12}.
\begin{definition}[Energy of a graph, \cite{LiSG12}]
\label{definition: graph energy}
{\em The energy of a graph $\Gr{G}$ on $n$ vertices, denoted by $\Enr{\Gr{G}}$, is given by
\begin{align}
\label{eq: graph energy}
\Enr{\Gr{G}} \eqdef \sum_{i=1}^n |\lambda_i(\Gr{G})|.
\end{align}}
\end{definition}

The following lemma relates the energy and the Lov\'{a}sz $\vartheta$-function of an induced subgraph to
those of the original graph.
It is general, and it is then applied to strongly regular graphs, and extended to regular graphs.
It partially relies on the Cauchy interlacing theorem, which is cited as follows:

\begin{theorem}[Cauchy interlacing theorem, \cite{BrouwerH2011}]
\label{thm:interlacing}
{\em Let $\lambda_{1} \ge \ldots \ge \lambda_{n}$ be the eigenvalues of a symmetric matrix $\mathbf{M}$
and let $\mu_{1}\ge\ldots\ge\mu_{m}$ be the eigenvalues of a principal $m \times m$ submatrix
of $\mathbf{M}$ (i.e., a submatrix that is obtained by deleting the same set of rows and columns
from $M$). Then, $\lambda_{i}\ge\mu_{i}\ge\lambda_{n-m+i}$ for $i=1,\ldots,m$.}
\end{theorem}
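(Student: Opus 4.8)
The plan is to prove the two interlacing inequalities via the Courant--Fischer variational (max--min) characterization of the eigenvalues of a symmetric matrix. The key structural observation is that the principal $m \times m$ submatrix $\mathbf{B}$ of $\mathbf{M}$, obtained by retaining a fixed index set $I$ of size $m$, has a Rayleigh quotient that is exactly the restriction of the Rayleigh quotient of $\mathbf{M}$ to the coordinate subspace $V = \mathrm{span}\{\mathbf{e}_i : i \in I\} \subseteq \Reals^n$. Concretely, if $\iota \colon \Reals^m \to \Reals^n$ denotes the isometric inclusion whose image is $V$, then $\mathbf{y}^{\mathrm{T}} \mathbf{B} \mathbf{y} = (\iota \mathbf{y})^{\mathrm{T}} \mathbf{M} (\iota \mathbf{y})$ and $\|\iota \mathbf{y}\| = \|\mathbf{y}\|$ for every $\mathbf{y} \in \Reals^m$. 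I would establish this identity first, since it is the bridge that lets the spectrum of $\mathbf{B}$ be read off from the quadratic form of $\mathbf{M}$.

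For the upper bound $\mu_i \le \lambda_i$, I would use the max--min form
\[
\lambda_i = \max_{\substack{S \subseteq \Reals^n \\ \dim S = i}} \ \min_{\mathbf{x} \in S \setminus \{\mathbf{0}\}} \frac{\mathbf{x}^{\mathrm{T}} \mathbf{M} \mathbf{x}}{\mathbf{x}^{\mathrm{T}} \mathbf{x}}, \qquad \mu_i = \max_{\substack{U \subseteq V \\ \dim U = i}} \ \min_{\mathbf{x} \in U \setminus \{\mathbf{0}\}} \frac{\mathbf{x}^{\mathrm{T}} \mathbf{M} \mathbf{x}}{\mathbf{x}^{\mathrm{T}} \mathbf{x}},
\]
where the formula for $\mu_i$ follows from the restriction identity above. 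Since the maximization defining $\mu_i$ ranges over the $i$-dimensional subspaces contained in $V$, which form a sub-collection of all $i$-dimensional subspaces of $\Reals^n$, maximizing over the smaller family can only decrease the value, giving $\mu_i \le \lambda_i$ at once.

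For the lower bound $\mu_i \ge \lambda_{n-m+i}$, I would switch to the dual min--max form $\lambda_k = \min_{\dim W = n-k+1} \max_{\mathbf{x} \in W \setminus \{\mathbf{0}\}} \frac{\mathbf{x}^{\mathrm{T}} \mathbf{M} \mathbf{x}}{\mathbf{x}^{\mathrm{T}} \mathbf{x}}$ together with its analogue for $\mu_i$, taken over the $(m-i+1)$-dimensional subspaces of $V$. Every such subspace is an $(n-k+1)$-dimensional subspace of $\Reals^n$ for $k = n-m+i$, so the minimum defining $\mu_i$ runs over a restricted family and is therefore no smaller than the unrestricted minimum defining $\lambda_{n-m+i}$, which yields $\mu_i \ge \lambda_{n-m+i}$. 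Equivalently, one may avoid re-deriving the min--max form by applying the already-established upper bound to $-\mathbf{M}$, whose eigenvalues are $-\lambda_n \ge \cdots \ge -\lambda_1$, and then relabelling the indices.

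I expect the main obstacle to be careful bookkeeping rather than any genuine difficulty: one must verify the Rayleigh-quotient restriction identity cleanly, record the correct subspace dimension in each variational formula, and track the index shift $k = n-m+i$ when passing between $\mathbf{B}$ and $\mathbf{M}$ (or between $\mathbf{M}$ and $-\mathbf{M}$). A minor point worth confirming is that the subspace dimensions $n-k+1$ and $m-i+1$ remain in the valid range $1,\ldots,n$, which guarantees that both inequalities hold for every $i = 1,\ldots,m$.
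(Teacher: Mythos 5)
Your argument is correct: the Rayleigh--quotient restriction identity, the max--min bound giving $\mu_i \le \lambda_i$, and the dual min--max bound (or, equivalently, the reduction to $-\mathbf{M}$ with the index relabelling $i \mapsto m-i+1$) giving $\mu_i \ge \lambda_{n-m+i}$ are all handled with the right subspace dimensions. Note, however, that the paper does not prove this statement at all --- it is quoted from \cite{BrouwerH2011} as a known auxiliary result, used only to justify the energy inequality for induced subgraphs in Lemma~\ref{lemma: induced subgraph} --- so there is no in-paper proof to compare against; your Courant--Fischer argument is the standard proof found in that reference.
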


\begin{lemma}
\label{lemma: induced subgraph}
{\em Let $\Gr{G}$ and $\Gr{H}$ be finite, undirected and simple graphs.
If $\Gr{H}$ is an induced subgraph of $\Gr{G}$, then
\begin{align}
\label{eq: induced subgraph - Lovasz}
& \vartheta(\Gr{H}) \leq \vartheta(\Gr{G}),  \\
\label{eq: induced subgraph - energy}
& \Enr{\Gr{H}} \leq \Enr{\Gr{G}}.
\end{align}}
\end{lemma}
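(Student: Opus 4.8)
The plan is to prove the two inequalities \eqref{eq: induced subgraph - Lovasz} and \eqref{eq: induced subgraph - energy} separately, since they rely on different machinery.

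\medskip
\noindent\textbf{The Lov\'{a}sz $\vartheta$-function inequality \eqref{eq: induced subgraph - Lovasz}.} This is a classical monotonicity property of $\vartheta$ under taking induced subgraphs, and the cleanest argument proceeds from the definition in \eqref{eq: Lovasz theta function}. Suppose $\Gr{H}$ is the induced subgraph of $\Gr{G}$ on a vertex subset $U \subseteq \V{\Gr{G}}$. First I would take any optimal orthonormal representation $\{{\bf{u}}_i : i \in \V{\Gr{G}}\}$ of $\Gr{G}$, together with its optimal handle ${\bf{c}}$, achieving the minimum in \eqref{eq: Lovasz theta function}. The key observation is that the \emph{restriction} $\{{\bf{u}}_i : i \in U\}$ is a valid orthonormal representation of $\Gr{H}$: whenever two distinct vertices $i,j \in U$ are nonadjacent in $\Gr{H}$, they are nonadjacent in $\Gr{G}$ as well (because $\Gr{H}$ is induced, so $\{i,j\}\in\E{\Gr{H}}$ iff $\{i,j\}\in\E{\Gr{G}}$), and hence ${\bf{u}}_i^{\mathrm{T}} {\bf{u}}_j = 0$ is inherited from the representation of $\Gr{G}$. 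Using the same handle ${\bf{c}}$, this feasible pair for $\Gr{H}$ gives
\begin{align*}
\vartheta(\Gr{H}) \leq \max_{i \in U} \frac{1}{({\bf{c}}^{\mathrm{T}} {\bf{u}}_i)^2}
\leq \max_{i \in \V{\Gr{G}}} \frac{1}{({\bf{c}}^{\mathrm{T}} {\bf{u}}_i)^2} = \vartheta(\Gr{G}),
\end{align*}
since maximizing over the smaller index set $U$ can only decrease the value. The crucial point here is that the induced-subgraph hypothesis is exactly what guarantees no \emph{new} nonadjacencies appear in $\Gr{H}$ that were absent in $\Gr{G}$; this is precisely why the restriction remains orthonormal, and it is where I expect to be most careful.

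\medskip
\noindent\textbf{The energy inequality \eqref{eq: induced subgraph - energy}.} For this I would invoke the Cauchy interlacing theorem (Theorem~\ref{thm:interlacing}). Since $\Gr{H}$ is an induced subgraph of $\Gr{G}$ on $m$ vertices, its adjacency matrix is a principal $m \times m$ submatrix of the adjacency matrix ${\bf{A}}$ of $\Gr{G}$ (obtained by deleting the rows and columns indexed by the discarded vertices). Writing $\Eigval{1}{\Gr{G}} \geq \ldots \geq \Eigval{n}{\Gr{G}}$ for the eigenvalues of $\Gr{G}$ and $\mu_1 \geq \ldots \geq \mu_m$ for those of $\Gr{H}$, interlacing gives $\Eigval{i}{\Gr{G}} \geq \mu_i \geq \Eigval{n-m+i}{\Gr{G}}$ for each $i \in \OneTo{m}$. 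The energy is the sum of absolute values of eigenvalues, and the standard way to control $\sum_i |\mu_i|$ is to bound the positive and negative parts separately. Each $|\mu_i|$ equals either $\mu_i$ (if $\mu_i \geq 0$) or $-\mu_i$ (if $\mu_i < 0$); by interlacing, a nonnegative $\mu_i$ is dominated by $\Eigval{i}{\Gr{G}}$, while a nonpositive $\mu_i$ satisfies $-\mu_i \leq -\Eigval{n-m+i}{\Gr{G}}$. Summing these and comparing against $\Enr{\Gr{G}} = \sum_{j=1}^n |\Eigval{j}{\Gr{G}}|$ — observing that the eigenvalues of $\Gr{G}$ used as bounds are distinct indices and that the remaining eigenvalues of $\Gr{G}$ contribute nonnegatively to $\Enr{\Gr{G}}$ — yields $\Enr{\Gr{H}} \leq \Enr{\Gr{G}}$.

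\medskip
The main obstacle, and the step requiring the most care, is the bookkeeping in the energy bound: one must verify that the interlacing bounds assign each $|\mu_i|$ to a \emph{distinct} eigenvalue of $\Gr{G}$ without double-counting, so that the total genuinely telescopes into a subsum of $\Enr{\Gr{G}}$. A clean way to organize this is to split the index set into those $i$ with $\mu_i \geq 0$ and those with $\mu_i < 0$, bound the former group by the top eigenvalues $\Eigval{i}{\Gr{G}}$ and the latter by the bottom eigenvalues $-\Eigval{n-m+i}{\Gr{G}}$, and check that these two blocks of indices of $\Gr{G}$ do not overlap (which holds because the positive-part indices sit among the largest eigenvalues and the negative-part indices among the smallest). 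The $\vartheta$-inequality, by contrast, is essentially immediate once the restriction-of-representation idea is in place.
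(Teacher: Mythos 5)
Your proposal is correct and follows essentially the same route as the paper: the $\vartheta$-inequality by restricting an orthonormal representation of $\Gr{G}$ to the vertices of the induced subgraph (valid precisely because inducedness preserves nonadjacency), and the energy inequality via the Cauchy interlacing theorem applied to the adjacency matrix. Your additional bookkeeping for the energy bound — splitting the eigenvalues of $\Gr{H}$ by sign and checking that the index blocks $\{1,\ldots,p\}$ and $\{n-m+p+1,\ldots,n\}$ of $\Gr{G}$ are disjoint (which holds since $m \leq n$) — correctly fills in a step the paper leaves implicit.
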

\begin{proof}
Inequality~\eqref{eq: induced subgraph - Lovasz} holds since the vertex set of an induced subgraph $\Gr{H}$ is a subset of the vertex set
of the original graph $\Gr{G}$, and the adjacency and nonadjacency relations of the unremoved vertices in $\Gr{H}$ are preserved as compared
to those in $\Gr{G}$. The result then follows from \eqref{eq: Lovasz theta function} since every orthonormal representation of $\Gr{G}$
yields an orthonormal representation of $\Gr{H}$ by taking the subset of the orthonormal vectors $\{{\bf{u}}_i\}$ that correspond to the
remaining vertices in $\Gr{H}$.

Inequality~\eqref{eq: induced subgraph - energy} follows from the Cauchy interlacing theorem (Theorem~\ref{thm:interlacing}),
applied to the adjacency matrix of the graph $\Gr{G}$.
Note that the adjacency matrix of the induced subgraph $\Gr{H}$ is obtained from the adjacency matrix of $\Gr{G}$
by deleting the rows and columns that correspond to the deleted vertices from $\Gr{G}$.
\end{proof}

\begin{remark}
{\em Inequality~\eqref{eq: induced subgraph - energy} states that the energy of an induced subgraph is at most the
energy of the original graph. However, this result does not generally extend to spanning subgraphs (see p.~64 of \cite{LiSG12}).
A simple counterexample demonstrating that the energy of a spanning subgraph can exceed that of the original graph
is given by considering $\Gr{G}=\CG{4}$ and removing an edge to obtain the path graph $\Gr{H} = \PathG{4}$ (on 4~vertices)
as a spanning subgraph of $\Gr{G}$.
The adjacency spectra of $\Gr{G}$ and $\Gr{H}$ are, respectively, given by $\{-2,0, 0, 2\}$ and
$\bigl\{\tfrac12 (\sqrt{5}+1), \, \tfrac12 (\sqrt{5}-1), \, -\tfrac12 (\sqrt{5}-1), -\tfrac12 (\sqrt{5}+1)\bigr\}$,  so it follows
that $\Enr{\Gr{G}} = 4 < 2 \sqrt{5} = \Enr{\Gr{H}}$.
On the other hand, a sufficient condition where the energy of a spanning subgraph does not exceed that of the original graph
is given in Theorem~4.20 of \cite{LiSG12}. This highlights that the energy of a spanning subgraph can be either greater or
smaller than that of the original graph, which is why such a condition is not included in Section~\ref{section: spanning subgraphs}.}
\end{remark}

We next utilize the spectral properties of strongly regular graphs, listed in Theorem~\ref{theorem: eigenvalues of srg},
to derive a closed-form expression for their energy.
\begin{theorem}[The adjacency spectra of strongly regular graphs, Chapter~21 of \cite{vanLintW02}]
\label{theorem: eigenvalues of srg}
{\em The following spectral properties of strongly regular graphs hold:
\begin{enumerate}[(1)]
\item \label{Item 1: eigenvalues of srg}
A strongly regular graph has at most three distinct eigenvalues.
\item \label{Item 2: eigenvalues of srg}
Let $\Gr{G}$ be a connected strongly regular graph in the family $\SRG(n,d,\lambda,\mu)$ (i.e., $\mu>0$).
Then, its adjacency spectrum consists of three distinct eigenvalues, where the largest
eigenvalue is given by $\Eigval{1}{\Gr{G}} = d$ with multiplicity~1, and the other two
distinct eigenvalues of its adjacency matrix are given by
\begin{align}
\label{eigs-SRG}
p_{1,2} = \tfrac12 \, \biggl( \lambda - \mu \pm \sqrt{ (\lambda-\mu)^2 + 4(d-\mu) } \, \biggr),
\end{align}
with the respective multiplicities
\begin{align}
\label{eq2:eig-multiplicities-SRG}
m_{1,2} = \tfrac12 \, \Biggl( n-1 \mp \frac{2d+(n-1)(\lambda-\mu)}{\sqrt{(\lambda-\mu)^2+4(d-\mu)}} \, \Biggr).
\end{align}
\item \label{Item 3: eigenvalues of srg}
A connected regular graph with exactly three distinct eigenvalues is strongly regular.
\item \label{Item 4: eigenvalues of srg}
Connected strongly regular graphs, for which $2d+(n-1)(\lambda-\mu) \neq 0$, have integral eigenvalues and their respective
multiplicities are distinct.
\item \label{Item 5: eigenvalues of srg}
A connected regular graph is strongly regular if and only if it has three distinct eigenvalues, where the largest
eigenvalue is of multiplicity~1.
\item \label{Item 6: eigenvalues of srg}
A disconnected strongly regular graph is a disjoint union of $m$ identical complete graphs $\CoG{r}$, where $m \geq 2$ and $r \in \naturals$.
It belongs to the family $\srg{mr}{r-1}{r-2}{0}$, and its adjacency spectrum is $\{ (r-1)^{[m]}, (-1)^{[m(r-1)]} \}$, where superscripts
indicate the multiplicities of the eigenvalues, thus having two distinct eigenvalues.
\end{enumerate}}
\end{theorem}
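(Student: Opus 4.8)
The plan is to derive every item from a single algebraic identity satisfied by the adjacency matrix. Counting walks of length two between a pair of vertices shows that ${\bf{A}}^2$ has diagonal entries equal to $d$, entry $\lambda$ at each adjacent pair, and entry $\mu$ at each distinct nonadjacent pair; hence
\begin{align}
{\bf{A}}^2 = d \, {\bf{I}} + \lambda \, {\bf{A}} + \mu \, ({\bf{J}}_n - {\bf{I}} - {\bf{A}}),
\end{align}
which rearranges to ${\bf{A}}^2 - (\lambda-\mu){\bf{A}} - (d-\mu){\bf{I}} = \mu {\bf{J}}_n$. For a $d$-regular graph the all-ones vector ${\bf{1}}$ is an eigenvector of ${\bf{A}}$ with eigenvalue $d$, and every eigenvector orthogonal to ${\bf{1}}$ lies in the kernel of ${\bf{J}}_n$; restricting the identity to that orthogonal complement forces any eigenvalue $\theta \ne d$ to satisfy $\theta^2 - (\lambda-\mu)\theta - (d-\mu) = 0$. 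This yields Item~\ref{Item 1: eigenvalues of srg} at once: besides $d$ only the two roots $p_{1,2}$ of \eqref{eigs-SRG} can occur, so there are at most three distinct eigenvalues. For a connected graph, Perron--Frobenius makes $d$ a simple eigenvalue with positive eigenvector ${\bf{1}}$, and when $\mu>0$ one checks the discriminant is positive, so $p_1 \ne p_2$ and the three-eigenvalue structure of Item~\ref{Item 2: eigenvalues of srg} holds.

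For the multiplicities I would impose the two linear constraints $1 + m_1 + m_2 = n$ (the eigenvalues exhaust the spectrum) and $d + m_1 p_1 + m_2 p_2 = \mathrm{Trace}({\bf{A}}) = 0$ (no self-loops), then solve the resulting $2\times 2$ system using $p_{1,2}$ from \eqref{eigs-SRG} to obtain \eqref{eq2:eig-multiplicities-SRG}. Item~\ref{Item 4: eigenvalues of srg} then follows from a rationality argument: the difference $m_2-m_1$ equals $(2d+(n-1)(\lambda-\mu))/\sqrt{(\lambda-\mu)^2+4(d-\mu)}$, so when the integer numerator is nonzero the multiplicities differ, and since that difference is itself an integer the square root must be rational, hence an integer. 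Consequently $p_{1,2}$ are rational; being eigenvalues of the integer matrix ${\bf{A}}$ they are algebraic integers, and rational algebraic integers are ordinary integers.

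For the disconnected case (Item~\ref{Item 6: eigenvalues of srg}), I would first note that $\mu>0$ would make the graph connected, since any two vertices are then joined through a common neighbor (diameter at most two); hence a disconnected strongly regular graph has $\mu=0$. With $\mu=0$ no two nonadjacent vertices share a neighbor, which forces each connected component to be a clique, and regularity makes all components complete graphs of a common order $r$. Thus the graph is $m$ disjoint copies of $\CoG{r}$ in the family $\srg{mr}{r-1}{r-2}{0}$, and its spectrum is read off from that of $\CoG{r}$, namely $r-1$ once and $-1$ with multiplicity $r-1$ per component.

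The converse statements (Items~\ref{Item 3: eigenvalues of srg} and~\ref{Item 5: eigenvalues of srg}) run the argument in the opposite direction and form the main obstacle. Given a connected $d$-regular graph with exactly three distinct eigenvalues $d > \theta > \tau$, the matrix $({\bf{A}}-\theta {\bf{I}})({\bf{A}}-\tau {\bf{I}})$ annihilates every eigenvector orthogonal to ${\bf{1}}$ and acts as $(d-\theta)(d-\tau)$ on ${\bf{1}}$; reading off diagonal, adjacent, and nonadjacent entries of the resulting matrix then shows the number of common neighbors is constant on adjacent pairs and constant on nonadjacent pairs, giving strong regularity. The delicate point is justifying that a matrix which annihilates the orthogonal complement of ${\bf{1}}$ equals a scalar multiple of ${\bf{J}}_n$; this relies crucially on the top eigenspace being one-dimensional, which is exactly why the multiplicity-one hypothesis appears in Item~\ref{Item 5: eigenvalues of srg} and what distinguishes the connected three-eigenvalue case from the two-eigenvalue disconnected graphs of Item~\ref{Item 6: eigenvalues of srg}.
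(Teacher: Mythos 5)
The paper does not actually prove this theorem --- it is quoted with a citation to Chapter~21 of van Lint and Wilson --- and your argument is precisely the standard proof found there: the walk-counting identity ${\bf{A}}^2 = d\,{\bf{I}} + \lambda\,{\bf{A}} + \mu\,({\bf{J}}_n - {\bf{I}} - {\bf{A}})$, restriction to the orthogonal complement of the all-ones vector to get the quadratic for the non-principal eigenvalues, the two trace conditions for the multiplicities, the rationality/algebraic-integer argument for integrality, and the factorization $({\bf{A}}-\theta{\bf{I}})({\bf{A}}-\tau{\bf{I}})$ being a multiple of ${\bf{J}}_n$ for the converse. Your proposal is correct; the only steps left implicit (positivity of the discriminant for a connected non-complete strongly regular graph, and that both multiplicities $m_{1,2}$ are strictly positive so that all three eigenvalues genuinely occur) are routine and follow from the same formulas you derive.
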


A closed-form expression for the energy of strongly regular graphs as a function of a general
parameter vector $(n,d,\lambda,\mu)$ does not appear to be explicitly available in the literature (see, e.g.,
\cite{BrouwerM22, Haemers08, KoolenM01, LiSG12}). Such an expression is next introduced for further analysis.
\begin{lemma}[The energy of strongly regular graphs]
\label{lemma: energy of strongly regular graphs}
{\em The energy of a strongly regular graph $\Gr{G}$ in the family $\srg{n}{d}{\lambda}{\mu}$ is given by
\begin{align}
\label{eq: energy of strongly regular graphs}
\Enr{\Gr{G}} = d \, \Biggl( 1 + \frac{2(n-d)+\lambda-\mu}{\sqrt{(\lambda-\mu)^2 + 4(d-\mu)}} \Biggr).
\end{align}}
\end{lemma}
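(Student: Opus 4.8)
The plan is to compute the energy directly from the closed-form adjacency spectrum of a strongly regular graph provided in Theorem~\ref{theorem: eigenvalues of srg}, then simplify. The energy is the sum of absolute values of all eigenvalues, so the first step is to identify all eigenvalues together with their multiplicities: the largest eigenvalue is $d$ with multiplicity~$1$, and the other two are $p_{1,2} = \tfrac12(\lambda-\mu \pm \sqrt{(\lambda-\mu)^2+4(d-\mu)})$ with multiplicities $m_{1,2}$ given in \eqref{eq2:eig-multiplicities-SRG}. Writing $t \triangleq \sqrt{(\lambda-\mu)^2+4(d-\mu)}$ (as in \eqref{eq3: 15.02.25}), I have $p_1 = \tfrac12(\lambda-\mu+t) \geq 0$ and $p_2 = \tfrac12(\lambda-\mu-t) \leq 0$, so $|p_1| = \tfrac12(\lambda-\mu+t)$ and $|p_2| = \tfrac12(t-\lambda+\mu)$. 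I would note that $p_2 \leq 0$ always holds since $t \geq |\lambda-\mu|$ (because $d \geq \mu$ for any strongly regular graph whose complement is nonempty), which justifies dropping the absolute-value signs.

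Next I would assemble the sum
\begin{align}
\Enr{\Gr{G}} = d + m_1 \, |p_1| + m_2 \, |p_2| = d + \tfrac12 m_1 (\lambda-\mu+t) + \tfrac12 m_2 (t-\lambda+\mu). \nonumber
\end{align}
Substituting $m_{1,2} = \tfrac12\bigl(n-1 \mp \tfrac{2d+(n-1)(\lambda-\mu)}{t}\bigr)$ and collecting terms, I would group the contributions into one piece proportional to $(\lambda-\mu)$ and one proportional to $t$. The key algebraic simplification is that $m_1|p_1| + m_2|p_2| = \tfrac12\bigl[(m_1+m_2)\,t + (m_1-m_2)(\lambda-\mu)\bigr]$, where $m_1+m_2 = n-1$ and $m_1 - m_2 = -\tfrac{2d+(n-1)(\lambda-\mu)}{t}$. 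This reduces the expression to elementary terms in $n,d,\lambda,\mu,t$ with no remaining multiplicities.

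The expected main obstacle is purely the bookkeeping in this final simplification: after substituting $m_1-m_2$, the term $(m_1-m_2)(\lambda-\mu)$ introduces a $(\lambda-\mu)^2/t$ contribution that must be combined with the $(m_1+m_2)t = (n-1)t$ contribution. Using $t^2 = (\lambda-\mu)^2 + 4(d-\mu)$ to rewrite $(\lambda-\mu)^2 = t^2 - 4(d-\mu)$, I expect the $t^2/t = t$ pieces and the free $d$ term to recombine cleanly, leaving a single factor of $d$ multiplying $\bigl(1 + \tfrac{2(n-d)+\lambda-\mu}{t}\bigr)$. I would verify the claimed form \eqref{eq: energy of strongly regular graphs} by checking that the $(d-\mu)$ terms cancel against the standalone $d$ and the $(n-1)(\lambda-\mu)$ terms, confirming that the coefficient of $1/t$ simplifies to $d\bigl(2(n-d)+\lambda-\mu\bigr)$. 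A sanity check on a known example (for instance the Petersen graph in $\srg{10}{3}{0}{1}$, whose energy is readily computed from its spectrum) would confirm the final formula.
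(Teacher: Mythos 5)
Your approach is the same as the paper's: sum the absolute values of the three eigenvalues weighted by their multiplicities and simplify. Your intermediate reduction is correct, and after substituting $m_1+m_2=n-1$ and $m_1-m_2=-\tfrac{2d+(n-1)(\lambda-\mu)}{t}$ together with $t^2-(\lambda-\mu)^2=4(d-\mu)$ you arrive at
\begin{align}
\Enr{\Gr{G}} \;=\; d+\frac{2(n-1)(d-\mu)-d(\lambda-\mu)}{t}. \nonumber
\end{align}
However, your expectation that the remaining step to the claimed form is ``purely bookkeeping'' is where the argument has a genuine gap: the identity
\begin{align}
2(n-1)(d-\mu)-d(\lambda-\mu)\;=\;d\bigl(2(n-d)+\lambda-\mu\bigr) \nonumber
\end{align}
is \emph{not} an algebraic identity in $(n,d,\lambda,\mu)$; it is equivalent to $(n-d-1)\mu=d(d-\lambda-1)$, i.e.\ Item~1 of Proposition~\ref{proposition: Feasible Parameters of Strongly Regular Graphs}. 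The paper invokes exactly this combinatorial constraint to close the computation, and without it your two expressions simply disagree for generic parameter vectors. Your Petersen-graph sanity check would not detect this, since any actual strongly regular graph satisfies the constraint automatically.

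A secondary point: the spectrum you quote from Item~2 of Theorem~\ref{theorem: eigenvalues of srg} is stated only for connected strongly regular graphs ($\mu>0$). The paper treats the disconnected case (a disjoint union of $m$ copies of $\CoG{r}$, in $\srg{mr}{r-1}{r-2}{0}$, with energy $2m(r-1)$) separately and verifies that the formula still holds there. Your writeup should either do the same or restrict the claim to the connected case. Also, your justification that $p_1\ge 0$ via ``$d\ge\mu$'' is fine, but note the relevant fact for dropping the absolute values is $t\ge|\lambda-\mu|$, which follows from $d\ge\mu$; for $p_1\ge 0$ you additionally use $\lambda-\mu+t\ge 0$, which again needs $d\ge\mu$. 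These are all true for strongly regular graphs, but they deserve the one-line justification you sketched rather than being taken for granted.
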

\begin{proof}
A connected strongly regular graph $\Gr{G}$ has an adjacency spectrum consisting of exactly three distinct eigenvalues, as
determined in Item~2 of Theorem~\ref{theorem: eigenvalues of srg}.
The largest eigenvalue of $\Gr{G}$ is~$d$ with multiplicity~1, the second largest eigenvalue is $\lambda_2(\Gr{G}) = p_1$ with
multiplicity~$m_1$, and the smallest eigenvalue is $\lambda_{\min}(\Gr{G}) = p_2$ with multiplicity~$m_2$ (see
\eqref{eigs-SRG} and \eqref{eq2:eig-multiplicities-SRG}). Consequently, by \eqref{eq: graph energy}, the energy of $\Gr{G}$ is given by
\begin{align}
\label{eq1: 03.03.2025}
\Enr{\Gr{G}} = d + m_1 |p_1| + m_2 |p_2|.
\end{align}
Substituting \eqref{eigs-SRG} and \eqref{eq2:eig-multiplicities-SRG} into \eqref{eq1: 03.03.2025} gives
\begin{align}
\Enr{\Gr{G}} &= d + \tfrac12 \, \Biggl( \lambda - \mu + \sqrt{ (\lambda-\mu)^2 + 4(d-\mu) } \, \Biggr) \cdot \tfrac12 \, \Biggl( n-1 - \frac{2d+(n-1)(\lambda-\mu)}{\sqrt{(\lambda-\mu)^2+4(d-\mu)}} \, \Biggr) \nonumber \\
& \hspace*{0.2cm} + \tfrac12 \, \Biggl( \mu - \lambda + \sqrt{ (\lambda-\mu)^2 + 4(d-\mu) } \Biggr) \cdot \tfrac12 \,
\Biggl( n-1 + \frac{2d+(n-1)(\lambda-\mu)}{\sqrt{(\lambda-\mu)^2+4(d-\mu)}} \, \Biggr) \nonumber  \\[0.1cm]
& = d + \tfrac12 (n-1) \sqrt{ (\lambda-\mu)^2 + 4(d-\mu) } + \frac{2d+(n-1)(\lambda-\mu)}{\sqrt{(\lambda-\mu)^2+4(d-\mu)}}
\cdot \tfrac12 (\mu-\lambda) \nonumber \\[0.1cm]
&= d + \frac{2(n-1)(d-\mu)+d(\mu-\lambda)}{\sqrt{(\lambda-\mu)^2+4(d-\mu)}} \nonumber \\[0.1cm]
&= d + \frac{\bigl[2(n-1)+\mu-\lambda \bigr] \, d - 2(n-1)\mu}{\sqrt{(\lambda-\mu)^2+4(d-\mu)}} \nonumber \\[0.1cm]
&=  d \, \Biggl( 1 + \frac{2(n-d)+\lambda-\mu}{\sqrt{(\lambda-\mu)^2 + 4(d-\mu)}} \Biggr),  \label{eq: 18.02.25}
\end{align}
where the last equality in \eqref{eq: 18.02.25} can be verified by relying on the
connection between the four parameters of a strongly regular graph as given in Item~1 of
Proposition~\ref{proposition: Feasible Parameters of Strongly Regular Graphs}, followed
by straightforward algebra. This establishes \eqref{eq: energy of strongly regular graphs}
for all connected strongly regular graphs.

If $\Gr{G}$ is a disconnected strongly regular graph then, by Item~6 of Theorem~\ref{theorem: eigenvalues of srg},
it is a disjoint union of $m$ identical complete graphs $\CoG{r}$, where $m \geq 2$ and $r \in \naturals$. The
adjacency spectrum of $\Gr{G}$ is then given by $\{ (r-1)^{[m]}, (-1)^{[m(r-1)]} \}$, so the graph energy is
$\Enr{\Gr{G}} = 2m(r-1)$. That strongly regular graph $\Gr{G}$ belongs to the family $\srg{mr}{r-1}{r-2}{0}$,
and substituting the parameters $n=mr$, $d=r-1$, $\lambda = r-2$, and $\mu=0$ into \eqref{eq: energy of strongly regular graphs}
verifies that the latter equality extends to disconnected strongly regular graphs.
\end{proof}

In the context of Lemma~\ref{lemma: energy of strongly regular graphs}, it is worth noting the following
result on maximal energy graphs, which combines Theorems~5.9 and~5.10 of \cite{LiSG12}.
\begin{theorem}[Maximal energy graphs, \cite{Haemers08,KoolenM01}]
\label{theorem: maximal energy graphs}
{\em Let $\Gr{G}$ be a graph on $n$ vertices. Then,
\begin{align}
\label{eq: UB on graph energy}
\Enr{\Gr{G}} \leq \tfrac12 n (\sqrt{n}+1),
\end{align}
with equality if $\Gr{G}$ is a strongly regular graph in the family
\begin{align}
\label{eq: srg}
\SRG\Bigl(n, \tfrac12 (n+\sqrt{n}), \tfrac14 (n+2\sqrt{n}), \tfrac14 (n+2\sqrt{n}) \Bigr).
\end{align}
Furthermore, such strongly regular graphs exist if one of the following conditions hold:
\begin{enumerate}[(1)]
\item $n = 4^p q^4$ with $p,q \in \naturals$;
\item $n = 4^{p+1} q^2$ with $p \in \naturals$ and $4q-1$ is a prime power, or $2q-1$ is a prime
power, or $q$ is a square of an integer, or $q<167$.
\end{enumerate}}
\end{theorem}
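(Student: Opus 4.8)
The statement comprises three separable parts: the universal upper bound \eqref{eq: UB on graph energy}, the identification of a strongly regular family that attains it, and the number-theoretic existence conditions. The plan is to dispatch the first two by elementary spectral and algebraic arguments, and to defer the third to known design-theoretic constructions.

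For the upper bound I would argue purely from the adjacency spectrum $\Eigval{1}{\Gr{G}} \ge \ldots \ge \Eigval{n}{\Gr{G}}$, using only the two trace identities $\sum_{i=1}^n \Eigval{i}{\Gr{G}} = 0$ and $\sum_{i=1}^n \bigl(\Eigval{i}{\Gr{G}}\bigr)^{2} = 2m$, where $m = \card{\E{\Gr{G}}}$. Isolating the Perron eigenvalue and applying the Cauchy--Schwarz inequality to the remaining $n-1$ eigenvalues gives
\begin{align}
\Enr{\Gr{G}} = \Eigval{1}{\Gr{G}} + \sum_{i=2}^n \bigl|\Eigval{i}{\Gr{G}}\bigr| \le \Eigval{1}{\Gr{G}} + \sqrt{(n-1)\bigl(2m - \bigl(\Eigval{1}{\Gr{G}}\bigr)^{2}\bigr)}. \nonumber
\end{align}
Since $\Eigval{1}{\Gr{G}} \ge 2m/n$ (the largest eigenvalue dominates the average degree, via the Rayleigh quotient of the all-ones vector) and the right-hand side is decreasing in $\Eigval{1}{\Gr{G}}$ on the range $\Eigval{1}{\Gr{G}} \ge \sqrt{2m/n}$ (a range containing $2m/n$ whenever $2m \ge n$, the low-density case being trivial), I would substitute $d := 2m/n$ to obtain $\Enr{\Gr{G}} \le d + \sqrt{(n-1)\,d(n-d)}$. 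A routine one-variable optimization over $d$ places the maximum at $d = \tfrac12(n+\sqrt n)$ and evaluates to $\tfrac12 n(\sqrt n + 1)$; I would not belabor this calculus step.

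For the equality clause I would sidestep re-deriving the equality conditions of the chain above and instead invoke Lemma~\ref{lemma: energy of strongly regular graphs} directly. Substituting $d = \tfrac12(n+\sqrt n)$ and $\lambda = \mu = \tfrac14(n + 2\sqrt n)$ into \eqref{eq: energy of strongly regular graphs}, one finds $d - \mu = \tfrac14 n$ and $n - d = \tfrac12(n - \sqrt n)$; because $\lambda = \mu$, the bracketed factor collapses to $1 + \tfrac{n-d}{\sqrt{d-\mu}} = 1 + (\sqrt n - 1) = \sqrt n$, whence $\Enr{\Gr{G}} = d\sqrt n = \tfrac12 n(\sqrt n + 1)$. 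Thus any strongly regular graph in the family \eqref{eq: srg} meets the bound with equality. Consistently, tracing back the equality conditions of the first part forces a regular graph whose non-principal eigenvalues all equal $\pm\tfrac12\sqrt n$ in absolute value --- a graph with three distinct eigenvalues and $\lambda = \mu$, which by Item~5 of Theorem~\ref{theorem: eigenvalues of srg} is precisely such a strongly regular graph.

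The genuinely hard part is the existence statement, and here I would not attempt a self-contained proof. The parameters in \eqref{eq: srg} require $\sqrt n$ to be an even integer, and graphs with this parameter vector are equivalent to regular Hadamard (equivalently, regular two-graph or symmetric-design) structures of the corresponding order; the two arithmetic conditions on $n$ delineate exactly the ranges in which such regular Hadamard matrices are known to exist. I would therefore establish the sufficient conditions by citing the standard Hadamard-matrix and Kronecker-product constructions underlying \cite{Haemers08,KoolenM01}, verifying only that the resulting strongly regular graphs carry the parameter vector \eqref{eq: srg}. The obstacle is intrinsic: the general existence of these extremal graphs is an open problem in design theory, so the theorem can only assert sufficiency under explicit arithmetic hypotheses rather than a clean characterization.
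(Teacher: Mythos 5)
The paper does not prove this statement: it is imported verbatim as a known result, combining Theorems~5.9 and~5.10 of \cite{LiSG12} (i.e., the Koolen--Moulton bound and Haemers' characterization of the extremal strongly regular graphs), so there is no internal proof to compare against. Your reconstruction is the standard and correct argument from those sources: the trace identities plus Cauchy--Schwarz give $\Enr{\Gr{G}} \le \Eigval{1}{\Gr{G}} + \sqrt{(n-1)\bigl(2m-\Eigval{1}{\Gr{G}}^2\bigr)}$, monotonicity in $\Eigval{1}{\Gr{G}}$ together with $\Eigval{1}{\Gr{G}} \ge 2m/n$ reduces to the one-variable bound $d+\sqrt{(n-1)d(n-d)}$, and the optimum at $d=\tfrac12(n+\sqrt n)$ yields $\tfrac12 n(\sqrt n+1)$; your substitution into Lemma~\ref{lemma: energy of strongly regular graphs} correctly verifies equality for the family \eqref{eq: srg}, and deferring the arithmetic existence conditions to the regular-Hadamard-matrix constructions of \cite{Haemers08} is exactly how that part is established in the literature (it genuinely cannot be made self-contained, since a full existence characterization is open). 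The only slight overreach is the closing remark that equality \emph{forces} membership in \eqref{eq: srg}: the theorem only asserts the ``if'' direction, and the converse needs a word about connectedness before invoking Item~\ref{Item 5: eigenvalues of srg} of Theorem~\ref{theorem: eigenvalues of srg}, but this is a side remark that does not affect what is being proved.
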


\begin{corollary}
\label{corollary: induced subgraph of srg}
{\em Let $\Gr{H}$ be an induced subgraph of a strongly regular graph $\Gr{G}$ belonging to the family $\srg{n}{d}{\lambda}{\mu}$.
Then, the following inequalities hold:
\begin{enumerate}[(1)]
\item \label{part 1: induced subgraph of srg}
\begin{align}
\label{eq1a: induced subgraph of srg}
& \vartheta(\Gr{H}) \leq \dfrac{n \, (t+\mu-\lambda)}{2d+t+\mu-\lambda}, \\[0.1cm]
\label{eq2a: induced subgraph of srg}
& \Enr{\Gr{H}} \leq d \, \biggl( 1 + \frac{2(n-d)+\lambda-\mu}{t} \biggr),
\end{align}
where $t \eqdef \sqrt{(\lambda-\mu)^2 + 4(d-\mu)}$, as given in \eqref{eq3: 15.02.25}.
\item \label{part 2: induced subgraph of srg}
Specifically, if $\Gr{H}$ is in the family $\srg{n'}{d'}{\lambda'}{\mu'}$, where $n' \leq n$, $d' \leq d$, $\lambda' \leq \lambda$,
and $\nu' \leq \nu$, then
\begin{align}
\label{eq1b: induced subgraph of srg}
& \dfrac{n'}{n} \cdot \dfrac{t'+\mu'-\lambda'}{t+\mu-\lambda} \cdot \dfrac{2d+t+\mu-\lambda}{2d'+t'+\mu'-\lambda'} \leq 1, \\[0.1cm]
\label{eq2b: induced subgraph of srg}
& \frac{t d'}{t' d} \cdot \frac{t'+2(n'-d')+\lambda'-\mu'}{t+2(n-d)+\lambda-\mu} \leq 1,
\end{align}
where $t' \eqdef \sqrt{(\lambda'-\mu')^2 + 4(d'-\mu')}$.
\end{enumerate}}
\end{corollary}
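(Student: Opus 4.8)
The plan is to obtain both parts as direct consequences of the two monotonicity relations in Lemma~\ref{lemma: induced subgraph}, specialized to strongly regular graphs via the closed-form expressions for the Lov\'{a}sz $\vartheta$-function (Theorem~\ref{theorem: theta of srg}) and for the energy (Lemma~\ref{lemma: energy of strongly regular graphs}). For Part~\ref{part 1: induced subgraph of srg}, I would first invoke Lemma~\ref{lemma: induced subgraph}, which gives $\vartheta(\Gr{H}) \leq \vartheta(\Gr{G})$ and $\Enr{\Gr{H}} \leq \Enr{\Gr{G}}$ since $\Gr{H}$ is an induced subgraph of $\Gr{G}$. As $\Gr{G}$ lies in $\srg{n}{d}{\lambda}{\mu}$, I then replace the right-hand sides by their closed forms, $\vartheta(\Gr{G}) = \tfrac{n(t+\mu-\lambda)}{2d+t+\mu-\lambda}$ from \eqref{eq1: 15.02.25} and $\Enr{\Gr{G}} = d\bigl(1 + \tfrac{2(n-d)+\lambda-\mu}{t}\bigr)$ from \eqref{eq: energy of strongly regular graphs}, which yields \eqref{eq1a: induced subgraph of srg} and \eqref{eq2a: induced subgraph of srg} at once. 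The only bookkeeping point is to confirm that the $t$ of Lemma~\ref{lemma: energy of strongly regular graphs}, written $\sqrt{(\lambda-\mu)^2+4(d-\mu)}$, coincides with the $t$ of \eqref{eq3: 15.02.25}, written $\sqrt{(\mu-\lambda)^2+4(d-\mu)}$; this is immediate from $(\lambda-\mu)^2=(\mu-\lambda)^2$.

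For Part~\ref{part 2: induced subgraph of srg}, under the added hypothesis that $\Gr{H}$ itself lies in $\srg{n'}{d'}{\lambda'}{\mu'}$, I would apply the same two closed-form expressions to $\Gr{H}$, obtaining $\vartheta(\Gr{H}) = \tfrac{n'(t'+\mu'-\lambda')}{2d'+t'+\mu'-\lambda'}$ and $\Enr{\Gr{H}} = d'\bigl(1 + \tfrac{2(n'-d')+\lambda'-\mu'}{t'}\bigr)$, where $t' \eqdef \sqrt{(\lambda'-\mu')^2+4(d'-\mu')}$. Substituting these into the two inequalities just proved in Part~\ref{part 1: induced subgraph of srg} and dividing through by the strictly positive right-hand sides produces the normalized product forms \eqref{eq1b: induced subgraph of srg} and \eqref{eq2b: induced subgraph of srg}. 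Here I would first record that, for non-complete and non-empty strongly regular graphs, the factors $t$, $t+\mu-\lambda$, $2d+t+\mu-\lambda$, and $t+2(n-d)+\lambda-\mu$ are all strictly positive---the last because the energy in \eqref{eq: energy of strongly regular graphs} is positive---so that clearing denominators preserves the direction of the inequality.

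Since every ingredient is already established in the excerpt, I do not expect a genuine obstacle: the work is confined to the elementary sign bookkeeping in Part~\ref{part 2: induced subgraph of srg} that licenses the passage from the monotonicity inequalities to their normalized product forms, together with the trivial reconciliation of the two expressions for $t$. I would also note that the stated orderings $n'\leq n$, $d'\leq d$, $\lambda'\leq\lambda$, $\mu'\leq\mu$ are not needed to derive \eqref{eq1b: induced subgraph of srg} and \eqref{eq2b: induced subgraph of srg}; they are separate necessary consequences of the induced-subgraph relation.
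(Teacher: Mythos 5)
Your proposal is correct and follows essentially the same route as the paper: the paper's proof likewise obtains all four inequalities by combining Lemma~\ref{lemma: induced subgraph} with the closed-form expressions in Theorem~\ref{theorem: theta of srg} and Lemma~\ref{lemma: energy of strongly regular graphs}, applied to $\Gr{G}$ for Part~(1) and additionally to $\Gr{H}$ for Part~(2). Your added remarks on the positivity of the denominators and on the dispensability of the orderings $n'\leq n$, $d'\leq d$, $\lambda'\leq\lambda$, $\mu'\leq\mu$ are sound but not part of the paper's (briefer) argument.
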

\begin{proof}
Inequalities \eqref{eq1a: induced subgraph of srg}--\eqref{eq2b: induced subgraph of srg} follow from
Lemma~\ref{lemma: induced subgraph}, combined with its specialization to strongly regular graphs according
to Theorem~\ref{theorem: theta of srg} and Lemma~\ref{lemma: energy of strongly regular graphs}.
\end{proof}

\begin{remark}
{\em The computational complexity of the Lov\'{a}sz $\vartheta$-function for a graph $\Gr{G}$ on $n$ vertices,
obtained by numerically solving the SDP problem in \eqref{eq: SDP}, is polynomial in $n$ and $\log \frac1{\varepsilon}$,
where $\varepsilon$ denotes the computation precision (see Theorem~11.11 of \cite{Lovasz19}). The constrained
convex optimization problem in \eqref{eq: SDP} involves $\tfrac12 n(n+1)$ optimization variables -- specifically, the entries
of the $n \times n$ positive semidefinite matrix ${\bf{B}}$ -- and at most quadratically many equality constraints in $n$
(scaling linearly in $n$ for sparse graphs). Consequently, while the Lov\'{a}sz $\vartheta$-function
remains computationally feasible for graphs with up to several hundred vertices, it becomes impractical for significantly
larger instances.
Checking the satisfiability of the necessary conditions for spanning or induced subgraphs in Lemmata~\ref{lemma: spanning subgraph}
and~\ref{lemma: induced subgraph} is thus infeasible for large $n$. However, computing only the second-largest or smallest
eigenvalue of the adjacency matrix can be efficiently performed using iterative methods, which avoid computing the entire spectrum.
These methods are particularly well-suited for large graphs, enabling the verification of the necessary conditions for spanning
or induced subgraphs of regular graphs in Corollary~\ref{corollary: 26.02.2025} and Item~1 of Corollary~\ref{corollary: induced subgraph of srg},
even for large instances. Furthermore, verifying the necessary conditions in Corollary~\ref{corollary: necessary cond.} and Item~2 of
Corollary~\ref{corollary: induced subgraph of srg} for spanning or induced strongly regular subgraphs of strongly regular graphs is
computationally straightforward and imposes no practical limitations, even for exceedingly large strongly regular graphs.}
\end{remark}

\begin{example}[Cycles as induced subgraphs of the Shrikhande Graph]
{\em The Shrikhande graph, denoted by $\Gr{G}$, is a strongly regular graph in the family $\srg{16}{6}{2}{2}$ (there are
two such nonisomorphic strongly regular graphs, see Section~10.6 of \cite{BrouwerM22}).
The graph $\Gr{G}$ is thus a strongly regular graph in which every pair of vertices has exactly two common neighbors (by the friendship theorem,
it stays in contrast to the non-existence of a regular graph, except for a triangle, in which every pair of vertices has exactly
one common neighbor). It
can be verified that $\Gr{G}$ contains induced cycles of length~3, 4, 5, 6, and~8, but no induced cycles of length~7 or greater
than~8. Figure~\ref{figure: Shrikhande graph} illustrates two such induced cycles, with a cycle of length~6 shown on the left plot
and a cycle of length~8 on the right plot, both generated using the SageMath software \cite{SageMath}.
\begin{figure}[htb]
\includegraphics[width=8cm]{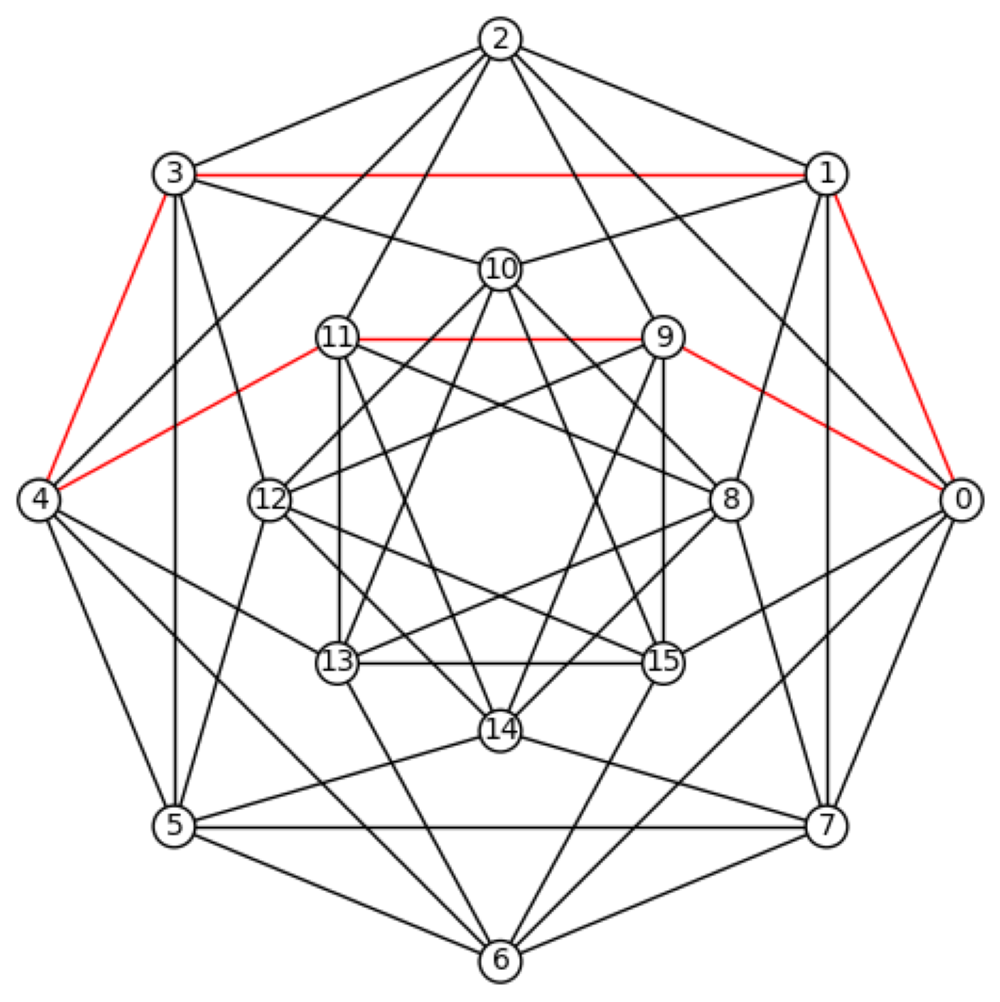}
\includegraphics[width=8cm]{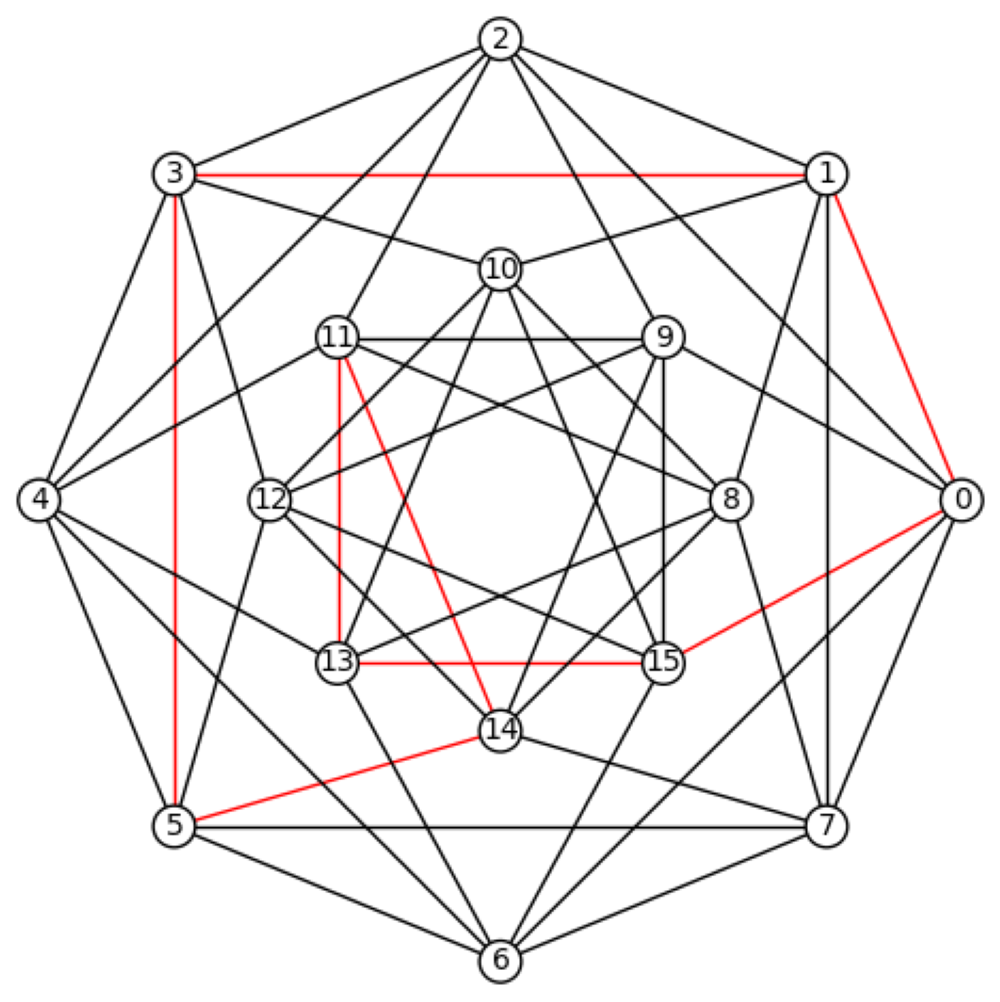}
\caption{The Shrikhande graph $\Gr{G}$ and two induced cycles of length~6 (left plot) and~8 (right plot), with their edges highlighted
in red. \label{figure: Shrikhande graph}}
\end{figure}
Let $\Gr{H} = \CG{\ell}$ denote a cycle of length $\ell \geq 3$. The necessary conditions in \eqref{eq1a: induced subgraph of srg} and
\eqref{eq2a: induced subgraph of srg} for a graph isomorphic to $\CG{\ell}$ to be an induced subgraph of $\Gr{G}$ are given by
$\vartheta(\CG{\ell}) \leq 4$ and $\Enr{\CG{\ell}} \leq 36$, respectively. These follow from the equalities $\vartheta(\Gr{G})=4$ by
Theorem~\ref{theorem: theta of srg} and $\Enr{\Gr{G}} = 36$ by Lemma~\ref{lemma: energy of strongly regular graphs}.
It can be verified that the former condition on the Lov\'{a}sz $\vartheta$-function is more restrictive in the studied case than the latter condition on the
graph energy, so we focus on the necessary condition $\vartheta(\CG{\ell}) \leq 4$. By \cite{Lovasz79_IT}, for all integers $\ell \geq 3$,
\begin{align}
\label{eq: theta of cycle graphs}
\vartheta(\CG{\ell}) =
\begin{dcases}
\frac{\ell}{1 + \sec \frac{\pi}{\ell}}, & \quad \ell \in \{3, 5, 7, \ldots \}, \\
\frac{\ell}{2}, & \quad \ell \in \{4, 6, 8, \ldots \}.
\end{dcases}
\end{align}
Consequently, by \eqref{eq: theta of cycle graphs}, for all $\ell > 8$, we get $\vartheta(\CG{\ell}) \geq \vartheta(\CG{9}) = 4.361 \ldots > 4 = \vartheta(\Gr{G})$,
verifying that $\Gr{G}$ does not contain any induced cycle of length $\ell > 8$.  Furthermore, $\vartheta(\CG{8}) = 4 = \vartheta(\Gr{G})$
and $\Gr{G}$ contains a cycle of length~8 as an induced subgraph (see the right plot of Figure~\ref{figure: Shrikhande graph}). This provides an example where
the necessary condition holds with equality, coinciding with the existence of the longest possible induced cycle. However, since $\Gr{G}$ does not contain an induced
cycle of length~7, despite $\vartheta(\CG{7}) = 3.31766 \ldots < 4 = \vartheta(\Gr{G})$, this serves as a counterexample where the necessary conditions in Item~1 of Corollary~\ref{corollary: induced subgraph of srg} fail to exclude the existence of an induced cycle of length~7 in $\Gr{G}$.

Another counterexample where the necessary conditions in Item~1 of Corollary~\ref{corollary: induced subgraph of srg} fail to exclude the existence of an induced cycle
of a given length is a connected, triangle-free graph $\Gr{G}$ of energy at least~4 (noting that $\Enr{\CG{3}} = 4$). By definition, such a graph does not contain any
triangle as a subgraph, yet the considered necessary conditions do not rule out the absence of an induced triangle in $\Gr{G}$. Among the connected strongly regular
graphs, there are seven currently known triangle-free graphs, and all of them are determined by their adjacency spectrum (see, e.g., Section~4.7 of \cite{SasonKHB25}).}
\end{example}

\begin{example}[Induced subgraphs of triangular graphs]
\label{example: induced subgraphs of triangular graphs}
{\em The family of triangular graphs consists of regular graphs obtained as the line graphs of complete
graphs on at least three vertices. Let $T_\ell$ denote the line graph of the complete graph $\CoG{\ell}$,
for all $\ell \geq 3$. This results in a strongly regular graph in the family
$\srg{\tfrac12 \ell(\ell-1)}{2\ell-4}{\ell-2}{4}$ for all $\ell \geq 4$ (see Section~1.1.7 of \cite{BrouwerM22}).
For $\ell \geq 4$ with $\ell \neq 8$, these connected, strongly regular graphs are unique for their respective
parameters. Their uniqueness further implies that they are determined by their spectra (following from
Theorem~34 of \cite{SasonKHB25}).

Now, let $\Gr{H}$ be a strongly regular graph in the family $\srg{n'}{d'}{\lambda'}{\mu'}$
for some specified (feasible) parameters. We consider whether $\Gr{H}$ can appear as an induced subgraph
of the triangular graph $T_\ell$, as a function of $\ell \geq 4$. Applying Item~2
of Corollary~\ref{corollary: induced subgraph of srg}, if either inequality \eqref{eq1b: induced subgraph of srg}
or \eqref{eq2b: induced subgraph of srg} is violated, then no strongly regular graph in the family
$\srg{n'}{d'}{\lambda'}{\mu'}$ (including $\Gr{H}$) can be an induced subgraph of $T_\ell$. This leads to a general
conclusion for those values of $\ell$ where either inequality fails. Applying Item~2
of Corollary~\ref{corollary: induced subgraph of srg} to $T_\ell$ with the parameters:
\begin{align}
n = \tfrac12 \ell (\ell-1), \quad d = 2(\ell-2), \quad \lambda = \ell-2, \quad \mu=4,
\end{align}
yields, by \eqref{eq3: 15.02.25},
\begin{align}
t = \ell-2, \quad \forall \, \ell \geq 4.
\end{align}
With straightforward algebra, the inequalities \eqref{eq1b: induced subgraph of srg} and \eqref{eq2b: induced subgraph of srg}
simplify to
\begin{align}
\label{eq1: 19.02.25}
\begin{dcases}
\dfrac{2n' (t'+\mu'-\lambda')}{2d'+t'+\mu'-\lambda'} \leq \ell, \\[0.1cm]
\dfrac{d' \, (t'+2(n'-d')+\lambda'-\mu')}{t'} \leq 2 \ell (\ell-3),
\end{dcases}
\end{align}
where $t' \eqdef \sqrt{(\lambda'-\mu')^2 + 4(d'-\mu')}$. If, for given parameters $(n', d', \lambda', \mu')$, at
least one of these inequalities is violated for a specific $\ell \geq 4$, then $\Gr{H}$ cannot be an induced subgraph of
$T_\ell$.}
\end{example}

\begin{example}[Continuation to Example~\ref{example: induced subgraphs of triangular graphs}: the Gewirtz graph]
\label{example: Gewirtz graph}
{\em Consider the Gewirtz graph, denoted by $\Gr{H}$, which is a connected, triangle-free, strongly regular graph in
the family $\srg{56}{10}{0}{2}$ (see Section~10.20 of \cite{BrouwerM22}). To clarify, since $\mu = 2 > 0$, the graph
$\Gr{H}$ is connected, and $\lambda=0$ confirms that it is triangle-free. Uniqueness in this parameter family ensures it is
determined by its spectrum \cite{SasonKHB25}.
Specializing \eqref{eq1: 19.02.25} for $\Gr{H}$, the first inequality is violated if and only if $\ell \leq 31$
(it dominates the second, energy-based, inequality in \eqref{eq1: 19.02.25}). Hence, $\Gr{H}$ cannot be an induced subgraph
of $T_\ell$ for all $\ell \leq 31$.
For $\ell \leq 11$, this result is trivial since $\tfrac12 \ell(\ell-1) < 56$, meaning that the order of
$T_\ell$ is too small to contain $\Gr{H}$ as a subgraph. However, for $\ell \geq 32$, this question remains open,
as both inequalities in \eqref{eq1: 19.02.25} hold}.
\end{example}

\end{document}